\newtheorem{theo}{Theorem}[section]
\newtheorem{lem}{Lemma}[section]
\newtheorem{prop}{Proposition}[subsection]
\newtheorem{Cor}{Corollary}[section]
\newcommand{\R}{\mathbb{R}}
\newcommand{\N}{\mathbb{N}}
\newcommand{\E}{\mathbb{E}}
\DeclareMathOperator{\argmin}{argmin}
\DeclareMathOperator{\Id}{Id}
\DeclareMathOperator{\Proj}{Proj}
\DeclareMathOperator{\tr}{tr}
\DeclareMathOperator{\pen}{pen}
\DeclareMathOperator{\Var}{Var}
\newcommand{\email}[1]{\href{mailto:#1}{\textsf{#1}}}
\title{Gaussian linear model selection in a dependent context}    
\author{Emmanuel Caron\footnote{Emmanuel Caron, Université Lumière Lyon 2, Laboratoire ERIC EA3083, 69007 Lyon, France. \newline Email: \email{emmanuel.caron@univ-lyon2.fr}} \and J\'er\^ome Dedecker\footnote{Jérôme Dedecker, Université de Paris, Laboratoire MAP5 UMR 8145, 75006 Paris, France. \newline Email: \email{jerome.dedecker@parisdescartes.fr}} \and Bertrand Michel\footnote{Bertrand Michel, Ecole Centrale de Nantes, Laboratoire de Mathématiques Jean Leray UMR 6629, 44300 Nantes, France. \newline Email: \email{bertrand.michel@ec-nantes.fr}}}     
\begin{document}

\maketitle

\pagestyle{plain}

\begin{abstract}
In this paper, we study the nonparametric  linear model, when the error process is a dependent Gaussian process. We  focus on the estimation of the mean vector via a model selection approach. We first give the general theoretical form of the penalty function, ensuring that the penalized estimator among a collection of models satisfies an oracle inequality. Then we derive a penalty shape involving the spectral radius of the covariance matrix of the errors, which can be chosen proportional to the dimension when the error process is stationary and short range dependent. However, this penalty can be too rough in some cases, in particular when the error process is long range dependent. In a second part, we focus on the fixed-design regression model assuming that the error process is a stationary Gaussian process. We propose a model selection procedure in order to estimate the mean function via piecewise polynomials on a regular partition, when the error process is either short range dependent, long range dependent or  anti-persistent. We present different kinds of penalties, depending on the memory of the process. For each case, an adaptive estimator is built, and the rates of convergence are computed.  Thanks to several sets of simulations, we study the performance of these different penalties for all types of errors (short memory, long memory and anti-persistent errors). Finally, we give an application of our method to the well-known Nile data, which clearly shows that the type of dependence of the error process must be taken into account.
\end{abstract}

\paragraph{Keywords : } Nonparametric regression, Model selection, Adaptive estimation, Short memory, Long memory 

\paragraph{MSC : } 62G05, 62M10, 60G22 

\section{Introduction}


Let us consider the  linear model
\begin{equation}\label{genericModel}
Y = t^{\ast} + \varepsilon,
\end{equation}
where $Y$ is the $n$-dimensional vector of observations, $t^*$ is an unknown (deterministic) vector to be estimated, and $\varepsilon$ is the vector of errors. It is well know that Model \eqref{genericModel} can serve as a canonical model to express a large class of statistical problems (see~\cite{birge2001gaussian}). 
In this paper, we focus on the estimation of the  vector $t^{\ast}$ with a model selection approach, in the general framework where the error process $\varepsilon$ is a dependent Gaussian random vector,  with  covariance matrix  $\Sigma$.
Our first goal is to give the theoretical form of the penalty function, depending on $\Sigma$, ensuring that the penalized estimator  among a collection of models satisfies an oracle inequality. 

This model has been widely studied for independent and identically distributed (i.i.d.) errors, in particular by Birg\'e and Massart in the Gaussian case~\cite{birge2001gaussian}. Baraud worked in the general i.i.d. case with a deterministic design first~\cite{baraud2000model}, then with a random design~\cite{baraud2002model}. Some extensions of these results to a $\beta$-mixing framework are presented in~\cite{baraud2001adaptive}.
 The idea of using a penalty function goes back to the pioneering works of Akaike~\cite{akaike1973information} and Mallows~\cite{mallows1973some}. Later, Birg\'e and Massart developed a non-asymptotic approach to the selection of penalized models~\cite{birge2001gaussian}, \cite{birge2001generalized}, \cite{birge2007minimal}.

We follow in this paper the strategy developed by Birg\'e and Massart which is based on a non-asymptotic control of the fluctuations of the empirical contrast.

Let us be more precise here. In order to find a linear subspace that realizes a bias-variance tradeoff, let us introduce a finite collection of models $\{S_{m}, m \in \mathcal{M}\}$, denoting by $d_{m}$ the dimension of $S_{m}$. Let  then $\hat{t}_{m}$  be the least squares estimator $\Proj_{S_{m}} (Y)$ of $t^{\ast}$ on $S_{m}$.
A penalization strategy is used by selecting a model with a criterion of the form
\[\hat{m} \in \mathrm{argmin}_{m \in \mathcal{M}} \left\{ \left \| Y - \hat{t}_{m} \right \|_{n}^{2} + \pen(m) \right\},\]
where $\| \cdot \|_n$ denotes the  (normalized) euclidean norm in ${\mathbb R}^n$, and  $\pen : \mathcal{M} \rightarrow \mathbb{R}^{+}$ is a penalty function defined on the family of models. 
Following the Birg\'e and Massart approach, we derive a penalty function which provides an oracle inequality for the model selection procedure in the dependent Gaussian framework.

In Section~\ref{sec::general_setting}, a general penalty shape is presented. The main term is the quantity $\tr (\Proj_{S_{m}} \Sigma)$ (tr denoting the trace) which plays the same role as the  term  $\text{Var}(\varepsilon_1) \, d_{m}$ in the results of Birg\'e and Massart for i.i.d. Gaussian errors. Similar penalties have already been introduced by Gendre~\cite{gendre2014model} in the context of model selection for additive regression. 
However Gendre \cite{gendre2014model} is not interested in the same questions as us: he is concerned with additive regression whereas our objective is to study the Gaussian regression with dependent errors. In the same way as for us, the analysis of \cite{gendre2014model} is based on a general Gaussian model selection, but it appears that for our concern, the general penalty form we provide is more appropriate than that provided by \cite{gendre2014model}.
In addition, the assumptions of \cite{gendre2014model} do not apply to the context of long range dependent or anti-persistent errors. 

Note that the trace  $\tr \left( \Proj_{ S_{m}} \Sigma \right)$ is bounded by $d_{m} \rho(\Sigma)$, where $\rho (\Sigma)$ is the spectral radius of the covariance matrix. Hence, neglecting some residuals terms (see Section~\ref{sec::general_setting}),  the  following penalty can be used: for any $K>1$,
\begin{equation}\label{simplepen}
\pen(m) \geq   K  \frac{\rho(\Sigma) d_m}n \, .
\end{equation}
For instance, if we suppose that the error process is a short memory stationary process with bounded spectral density, then the spectral radius is bounded, and this penalty shape is very closed to the i.i.d. case up to a constant. The penalty can still be chosen proportional to the dimension, as in the i.i.d. case, but the usual variance term is replaced by the spectral radius of the covariance matrix. 

However, the penalty \eqref{simplepen} may be too rough in some cases, in particular if the error process is long range dependent.  To see how to handle this case in a concrete situation, we study in Sections~\ref{sec::short_long_mem} and~\ref{sec::simus} the fixed-design regression model
\begin{equation}\label{fixedreg}
Y_i= f^* \left ( \frac i n \right ) + \varepsilon_i \, , 
\end{equation}
where $(\varepsilon_i)_{i \geq 1}$ is a stationary Gaussian process. By standard arguments,  this model can be written as a special case of the generic Model \eqref{genericModel}. 

Note that Model \eqref{fixedreg} has been widely studied in the literature (with possibly non Gaussian errors) via kernel or wavelets methods. 

For kernel estimators, let us first quote the paper by Hall and Hart \cite{HallHart1990Long}, who considered a particular class of Gaussian errors. The authors showed in particular that, for a twice differentiable function $f^*$, the rate is the same as in the i.i.d. case if and only if $ \sum_{k>0} |\mathrm{Cov}(\varepsilon_1, \varepsilon_k)| < \infty$, and they gave minimax rates in the long range dependent case. 
 Let us also cite the papers by Cs\"{o}rg\H{o} and Mielniczuk \cite{CsMi1995Short}, \cite{CsMi1995Long}, 
 \cite{CsMi1995Long2} (long memory is considered in \cite{CsMi1995Long} and \cite{CsMi1995Long2}), Tran et al \cite{RoussasEtAl1996Short} (short memory case), and Robinson \cite{Rob1997Dep}. Robinson's article provides very general results for  short range and long range dependent processes, and rates of convergence for anti-persistent errors (also called negatively correlated errors) can be derived from his Lemma 3. Local polynomial fitting with long memory, short memory and anti-persistent errors is considered by Beran and Feng~\cite{BeFe2002Long}. Note that none of these articles adresses the issues of adaptive estimation or data-driven bandwidth selection. 
 
For wavelets type estimators,  let us first quote the paper by Wang \cite{Wang1996LongMemo}, who gave minimax results in the long range dependent case, when the function $f^*$ belongs to a Besov class. Let us also cite the papers by Johnstone and Silverman \cite{JoSi1997Corr}, Johnstone \cite{Jo1999Corr},  and more recently Li and Xiao \cite{LiXi2007Long}  and Beran and Shumeyko \cite{BeSh2012Long}. These four papers addressed the issue of a data-driven choice of the threshold. Theorem 1 in \cite{Jo1999Corr} gave a very precise minimax result (up to  constants), but for an  asymptotic model which is a bit different from \eqref{fixedreg} (see the discussion at the end of the paper \cite{Jo1999Corr}).  By adapting the block thresholding method described in Hall et al \cite{HKP1999} to the long memory case, Li and Xiao \cite{LiXi2007Long} showed that the block thresholded wavelets estimators are adaptive and minimax for a large class of functions.

In Sections~\ref{sec::short_long_mem} and~\ref{sec::simus} of the  present paper, we propose a model selection procedure to estimate $f^*$ via piecewise polynomials on a regular partition of size $m$. The choice of piecewise polynomials is very natural here, since the function $f^*$ is supported on $[0,1]$, and such estimators do not show  bad behaviors near the boundary. We show that
 
\begin{itemize}
\item For short memory error processes (i.e. when $\rho(\Sigma)$ is uniformly bounded) the penalty is of the form 
$$\text{pen}(m)= K \frac m n   $$ (for some constant $K>0$ to be calibrated),  the penalized estimator is adaptive with respect to the unknown regularity of the function $f^*$,  and yields the same rates of convergence as in the i.i.d setting. 
\item For long memory processes, that is when the auto-covariances $\gamma_\varepsilon(k)$ of the error process are such that
\[ | \gamma_\varepsilon (k) | \leq \kappa k^{-\gamma}, \qquad \text{for some} \ \kappa > 0 \ \text{and} \ \gamma \in (0,1),\]
the penalty is a concave function of $(m/n)$
$$\text{pen}(m)= K  \left( \frac m n \right )^\gamma $$ (for some constant $K>0$ to be calibrated), the penalized estimator is adaptive with respect to the unknown regularity of the function $f^*$,  and yields the same minimax rates of convergence as in \cite{Wang1996LongMemo}. 
 
\item For anti-persistent errors such that 
$$
\mathrm{Var}(\varepsilon_1 + \cdots  + \varepsilon_n)  \leq \kappa n^{2-\gamma}, \qquad \text{for some} \ \kappa > 0 \ \text{and} \ \gamma \in (1,2) ,
$$
and in the case of regressorams (piecewise polynomials of degree 0), the penalty has the form 
$$\text{pen}(m)= K  \left( \frac{m^\gamma}{n^\gamma}  + \frac{\log (m)}{n} \right ) $$  (for some constant $K>0$ to be calibrated). The main part of the penalty is then a convex function of $(m/n)$. The 
penalized estimator is adaptive with respect to the unknown regularity of the function $f^*$,  and yields faster rates of convergence than in the i.i.d setting.  Note that similar rates can also be deduced from Lemma 3 in \cite{Rob1997Dep}. 
\end{itemize} 
 
In Section~\ref{sec::simus}, we simulate different kind of short memory processes (a Gaussian ARMA(2,1) process, two non Gaussian $\beta$-mixing Markov chains), of long memory processes (a fractional Gaussian noise with Hurst index in (1/2,1), and a non Gaussian $\beta$-mixing Markov chain), and an anti-persistent process (a fractional Gaussian noise with Hurst index in (0, 1/2)). For regressograms on a regular partition of size $m$, we investigate different kind of penalties:  the usual penalty proportional to $m/n$, a penalty proportional to $(m/n)^\gamma$  in the case of long range dependent or anti-persistent errors, and some penalties for which $\gamma$ is estimated via an estimator of the Hurst index based on the $Y_i$'s or on the residuals.  Finally, an important message of this paper is that the slope heuristics~\cite{birge2007minimal} can be adapted to calibrate penalties in the context of regression with dependent errors. 

In Section~\ref{sec::data_Nil}, we give an application of our method to the well known Nile data, and we continue the discussion started in Robinson's article   \cite{Rob1997Dep}. In Section~\ref{sec::discuss}, we discuss other possible applications of the generals results of Section~\ref{sec::general_setting}. Finally, Section~\ref{sec::proofs} is devoted to the proofs of the results of Sections~\ref{sec::general_setting} and~\ref{sec::short_long_mem}.
 



\section{A Gaussian linear model selection theorem in a dependent context}
\label{sec::general_setting}
 
\subsection{General setting} 

Recall the equation of the Gaussian linear model~\eqref{genericModel}
$$
Y  = t^{\ast}  + \varepsilon,
$$
where the mean vector $t^{\ast}$ belongs to  $\R^n$ and where the error vector $\varepsilon$ is a Gaussian random vector. We consider the general setting where the components of $Y$ are not independent  
\begin{equation*}
\label{eq:}
\varepsilon \sim \mathcal{N}_n(0, \Sigma).
\end{equation*}
The covariance matrix $\Sigma$ is a $n \times n$ semidefinite matrix with eigenvalues $\lambda_1 \geq \dots \geq\lambda_ n\geq0$. We also introduce the spectral radius of $\Sigma$
\[\rho(\Sigma) = \max_{1 \leq i \leq n} \lambda_{i} = \lambda_1 .\] 

The aim is to estimate the unknown vector $t^{\ast} $ from the observation $Y$. One standard strategy is to constrain the estimator to belong to a given linear subspace $S $ of $\R^n$. Let $\| \cdot \|_n$ denotes the (normalized) euclidean norm in $\R ^n$
\[ \| t \|_n ^2 =  \frac 1n \sum_{i=1}^n  t_i^2 .\]
The least squares contrast  is defined for $t \in \R^n$ by
\[\gamma_n(t) = \left \| Y - t \right \|_n^{2},\]
and the  minimizer of $\gamma$  over $S$ is the orthogonal projection of $Y$ on $S$
\[ \Proj_{S}(Y) = \argmin_{t \in S} \gamma_n(t).\]
With a slight abuse of notation, we shall write $\Proj_{S}$ for the projection operator on  $S$ and for its matrix on the canonical basis.
The  $\ell^{2}$ risk of an estimator $\hat t$ is defined by 
\[R(\hat t) = \mathbb{E} \left[ \left \| \hat t   - t^{\ast} \right \|_n^{2} \right] , \]
where the expectation is under the distribution of $Y$.  Using Pythagoras equality  in $\R^n$ together with \eqref{genericModel}, we find that the risk of $\Proj_{S}(Y) $ satisfies the following bias-variance decomposition 
\[\mathbb{E} \left[ \left \| t^{\ast} - \Proj_{S}(Y) \right \|_n^{2} \right] = \left \| (\Id - \Proj_{S}) t^{\ast} \right \|_n^{2} + \mathbb{E} \left[ \left \|  \Proj_{S} (\varepsilon) \right \|_n^{2} \right].\]
The bias $\left \| (\Id - \Proj_{S}) t^{\ast} \right \|_n^{2}$ is small for large enough linear subspace $S$. It can be easily checked that the variance term is equal to 
$\ \mathbb{E} \left[ \left \|  \Proj_{S} (\varepsilon) \right \|_n^{2} \right] = \frac{1}{n} \tr \left(   \Proj_ S  \Sigma  \right)$, see the proof of Theorem~\ref{theo:main_theorem}.  As the i.i.d. case, the variance term tends to increase with the dimension of $S$.

In order to find a linear subspace that realizes a bias-variance tradeoff, we introduce a finite collection of linear subspaces $\{S_{m}, m \in \mathcal{M}\}$ that we call {\it models}, and we denote by $d_{m}$ the dimension of $S_{m}$. For $m \in \mathcal{M}$, we denote by $ \hat{t}_{m}$ the least squares estimator $\Proj_{S_m}(Y) $ of $t^{\ast}$ on $S_m$. 
We also introduce the oracle model $m_{0}$, that is the model that provides the least squares estimator with minimum risk
\[m_{0} \in \mathrm{argmin}_{m \in \mathcal{M}} \{R(\hat{t}_{m})\}.\]
Now the aim is to select a model in the collection such that the  risk of the selected estimator is as close as possible to the oracle model.

The true risk $  R(\hat{t}_{m}) $ of $\hat{t}_{m}$  being unknown in practice, we introduce the empirical risk
\[\widehat{R}(\hat{t}_{m}) =  \left \| Y - \hat{t}_{m} \right \|_n^{2}.\]
Obviously this criterion can not be used to select a model in the collection because of the overfitting effect. We follow a penalization strategy ~\cite{akaike1973information,mallows1973some,birge2001gaussian,massart2007concentration} by selecting a model with a criterion of the form
 \begin{equation}
\hat{m} \in \mathrm{argmin}_{m \in \mathcal{M}} \left\{ \left \| Y - \hat{t}_{m} \right \|_n^{2} + \pen(m) \right\},
\label{crit}
\end{equation}
where  $\pen : \mathcal{M} \rightarrow \mathbb{R}^{+}$ is a penalty function defined on the family of models.  In this paper we perform a non asymptotic analysis of the risk of the selected estimator $\hat{t}_{\hat m}$ \cite{massart2007concentration}. By this way we derive a penalty function which provides an oracle inequality for the model selection procedure, in the dependent Gaussian context.

\subsection{A general Gaussian model selection result}

Let $\pi = \{\pi_m , \, m \in \mathcal M  \} $ be a probability measure defined on $\mathcal M$ : $\sum_{m \in \mathcal M} \pi_{m} = 1$. We first give a general shape for the penalty function and the corresponding oracle inequality.
\begin{theo}
\label{theo:main_theorem}
For some constant $K > 1$, for any penalty function $\pen : \mathcal M \rightarrow \R^+$  such that for any $m \in \mathcal M$,
\begin{equation}
\pen(m)  \geq   \frac Kn  \left(  \sqrt{ \tr \left( \Proj_{ S_{m}} \Sigma   \right) +  \rho(\Sigma) }+ \sqrt{\rho(\Sigma)} \sqrt{2 \log \left( \frac{1}{\pi_{m}} \right) }  \right)^{2},
\label{eq:pen0}
\end{equation}
then there exists a constant $C> 1$ which only depends on $K$   such that the estimator $\hat{t}_{\hat m}$ selected by the criterion \eqref{crit} satisfies 
\begin{equation}
\mathbb{E} \left[ \left \| t^{\ast} - \hat{t}_{\hat m} \right \|_n^{2} \right] \leq C \left( \inf_{m \in \mathcal M} \left\{ \mathbb{E} \left[ \left \| t^{\ast} - \hat{t}_{m} \right \|_n^{2} \right] + \pen(m) \right\} + \frac {\rho(\Sigma)}{n} \right) .
\label{eq:riskbound}
\end{equation}
\end{theo}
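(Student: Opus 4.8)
The plan is to follow the Birgé–Massart strategy, starting from the defining inequality of the selected model. Since $\hat m$ minimises the penalised criterion \eqref{crit}, for every fixed $m \in \mathcal M$ one has $\gamma_n(\hat t_{\hat m}) + \pen(\hat m) \le \gamma_n(\hat t_m) + \pen(m)$. Writing $\langle u,v\rangle_n = \frac1n\sum_i u_iv_i$, introducing the Gaussian linear functional $\nu_n(u) = \langle \varepsilon, u\rangle_n$, and using $\gamma_n(t)-\gamma_n(t^*) = \|t-t^*\|_n^2 - 2\nu_n(t-t^*)$ (expand with $Y=t^*+\varepsilon$), this becomes the basic inequality
\[
\|t^* - \hat t_{\hat m}\|_n^2 \le \|t^* - \hat t_m\|_n^2 + \pen(m) - \pen(\hat m) + 2\nu_n(\hat t_{\hat m} - t^*) - 2\nu_n(\hat t_m - t^*).
\]
Everything then reduces to controlling the term $2\nu_n(\hat t_{m'} - t^*)$ uniformly over the collection, in particular at the random index $\hat m$.

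The key step is an \emph{enlarged-space} bound. For each $m'$ introduce $\bar S_{m'} = S_{m'} + \R\, t^*$, of dimension at most $d_{m'}+1$, with projector $\bar P_{m'}$. Since $\hat t_{m'} - t^* = \Proj_{S_{m'}}\varepsilon - (\Id-\Proj_{S_{m'}})t^*$ lies in $\bar S_{m'}$, self-adjointness of $\bar P_{m'}$ and Cauchy--Schwarz give, for any $\theta\in(0,1)$,
\[
2\nu_n(\hat t_{m'}-t^*) \le 2\|\bar P_{m'}\varepsilon\|_n\,\|\hat t_{m'}-t^*\|_n \le \theta\,\|\hat t_{m'}-t^*\|_n^2 + \theta^{-1}\|\bar P_{m'}\varepsilon\|_n^2 .
\]
This device absorbs the deterministic bias direction into a single projection, and it is exactly why the extra $\rho(\Sigma)$ appears in \eqref{eq:pen0}: enlarging $S_{m'}$ by one dimension raises the trace only by $\tr(\bar P_{m'}\Sigma) \le \tr(\Proj_{S_{m'}}\Sigma) + \rho(\Sigma)$.

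Next comes the concentration argument. Writing $\varepsilon=\Sigma^{1/2}g$ with $g$ standard Gaussian, the map $g\mapsto\|\bar P_{m'}\Sigma^{1/2}g\|$ is $\sqrt{\rho(\Sigma)}$-Lipschitz, and $\E\|\bar P_{m'}\varepsilon\|\le\sqrt{\tr(\bar P_{m'}\Sigma)}\le\sqrt{\tr(\Proj_{S_{m'}}\Sigma)+\rho(\Sigma)}$. The Gaussian concentration (Cirel'son--Ibragimov--Sudakov) inequality then yields, for each $\xi>0$,
\[
\P\!\left(\|\bar P_{m'}\varepsilon\| \ge \sqrt{\tr(\Proj_{S_{m'}}\Sigma)+\rho(\Sigma)} + \sqrt{2\rho(\Sigma)\big(\log(1/\pi_{m'})+\xi\big)}\right) \le \pi_{m'}\,e^{-\xi}.
\]
Summing against $\sum_{m'}\pi_{m'}=1$ produces one global event of probability at least $1-e^{-\xi}$ on which the bound holds for all $m'$ at once. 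On it, splitting the $\xi$-part via $(a+b)^2\le(1+\delta)a^2+(1+\delta^{-1})b^2$, the main term $\theta^{-1}\|\bar P_{m'}\varepsilon\|_n^2$ is bounded by $\pen(m') + c\,\rho(\Sigma)\xi/n$ as soon as $(1+\delta)/(\theta K)\le 1$; this is feasible \emph{precisely because} $K>1$, taking $\delta<K-1$ and $\theta=(1+\delta)/K<1$.

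Inserting this at $m'=\hat m$ into the basic inequality, the penalty $\pen(\hat m)$ cancels and $\theta\|t^*-\hat t_{\hat m}\|_n^2$ moves left, leaving, on the global event,
\[
(1-\theta)\|t^*-\hat t_{\hat m}\|_n^2 \le \|t^*-\hat t_m\|_n^2 - 2\nu_n(\hat t_m - t^*) + \pen(m) + c\,\rho(\Sigma)\xi/n .
\]
Viewing the difference of the two sides (without the $\xi$-term) as a single variable $V$ with $\P(V>c\rho(\Sigma)\xi/n)\le e^{-\xi}$, integrating the tail gives $\E[V]\le c'\rho(\Sigma)/n$. Since $\E[-2\nu_n(\hat t_m-t^*)]=-\frac2n\tr(\Proj_{S_m}\Sigma)\le 0$ may be discarded and $\E\|t^*-\hat t_m\|_n^2$ is the risk of $\hat t_m$, dividing by $1-\theta$ and taking the infimum over $m$ yields \eqref{eq:riskbound} with $C=C(K)$. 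The main obstacle is the cross term between noise and bias: a naive split into a $\chi^2$ part and a separate linear Gaussian would force the penalty to dominate $2\,\E\|\Proj_{S_{m'}}\varepsilon\|_n^2 = \frac2n\tr(\Proj_{S_{m'}}\Sigma)$, which is impossible for $K<2$; the enlarged-space Cauchy--Schwarz replaces the factor $2$ by $\theta^{-1}$ balanced against $K$, and is the crux of the proof.
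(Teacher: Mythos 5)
Your proof is correct and follows essentially the same route as the paper: the basic inequality from the definition of $\hat m$, the enlarged space $\bar S_m = S_m + \R t^*$ (which is exactly where the extra $\rho(\Sigma)$ in the penalty comes from), Gaussian concentration of the Lipschitz map $g \mapsto \|\Proj_{\bar S_m}\Sigma^{1/2}g\|$, a union bound weighted by the $\pi_m$, and Young-type splits calibrated by $K>1$. The only difference is cosmetic bookkeeping: you integrate a uniform tail bound in $\xi$, while the paper encodes the same concentration via exponential variables $\xi_m$ and bounds $\E\bigl[\sum_m(\cdot)_+\bigr]$ directly.
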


The main term in the penalty shape \eqref{eq:pen0} is the trace term $ \tr \left( \Proj_{ S_{m}} \Sigma   \right) $.  This quantity plays the same role as the term $\Var(\epsilon_{1}) d_m$ in the results of Birg\'e and Massart \ for independent Gaussian errors~\cite{birge2001gaussian,massart2007concentration}. 
Of course, this penalty can only be calculated if the  matrix $\Sigma$ is completely known. However we will see that, in certain cases, we can consider effective strategies to circumvent this issue (see Sections~\ref{sec::short_long_mem} and~\ref{sec::simus}).

We can propose penalty shapes from the upper bounds
\begin{equation*}
\tr \left( \Proj_{ S_{m}} \Sigma   \right)  \leq      \sum_{i=1}^{d_m} \lambda_{i} 
 \leq    d_{m}  \rho(\Sigma).
\label{chap4:somme_lambda}
\end{equation*}
Actually, with a minor modification of the proof of Theorem~\ref{theo:main_theorem}, it can be checked that the risk bound \eqref{eq:riskbound} is still valid when replacing the lower bound in \eqref{eq:pen0}  by
\begin{equation*}
\pen(m) \geq   \frac Kn \left( \sqrt{\sum_{i=1}^{d_{m}} \lambda_{i}} + \sqrt{\rho(\Sigma)} \sqrt{2 \log \left( \frac{1}{\pi_{m}} \right)}  \right)^{2},
\end{equation*}
 or by 
 \begin{equation}
\pen(m) \geq   K  \frac{\rho(\Sigma)}n  \left( \sqrt{ d_{m}}  +  \sqrt{2 \log \left( \frac{1}{\pi_{m}} \right)}   \right)^{2},
\label{eq:pen2}
 \end{equation}
for any $K>1$.

If the sequence  $(\varepsilon_{i})_{i \geq 1}$ is a stationary and short memory Gaussian process, then the spectral radius is bounded and the penalty shape~\eqref{eq:pen2} is completely in line with the case of  independent Gaussian errors~\cite{birge2001gaussian,massart2007concentration}, the usual variance term $\Var(\epsilon_{1})$ being replaced by the spectral radius $\rho(\Sigma)$. 

The three penalty shapes given below depend on the probability  $\pi$. If the collection of model is not too rich (see for instance \cite{birge2001gaussian,massart2007concentration} or Chapter 2 in~\cite{giraud2014introduction}), it might be chosen in such a way that 
$$
\rho(\Sigma)  \log \left( \frac{1}{\pi_{m}} \right)
$$
is smaller or of the same order as the main terms $\tr \left( \Proj_{ S_{m}} \Sigma   \right)$, $\sum_{i=1}^{d_{m}} \lambda_{i} $ or  $d_{m}\rho(\Sigma)$. 
To sum up, if the spectral radius is bounded  and if the collection of models is not too rich we see that the penalty can be chosen proportional to the dimension $d_m$, as in the independent case.

It is tempting to keep the penalty shape \eqref{eq:pen2} as a general penalty shape for Gaussian linear model selection with dependent errors. However, as we will see later in the paper, this penalty shape is too rough in some cases. For instance, it cannot lead to  minimax rates of convergence for non parametric regression with long range dependent errors (see Subsection~\ref{sub:case_long_range}). 
 
At this point, it should be clearly quoted that a penalty similar to \eqref{eq:pen0} has been given in the paper~\cite{gendre2014model}. The main difference is that, in the inequality similar to~\eqref{eq:riskbound} proved in~\cite{gendre2014model} (Inequality (2.2) of Theorem 2.1 in~\cite{gendre2014model}), the residual term is $\rho(\Sigma) R_n/n$ instead of 
$\rho(\Sigma)/n$. For the questions he has in mind (which are not directly related to  time series), Gendre is able to effectively control this additional term $R_n$. But it does not seem easy to handle for long range dependent errors or anti-persistent errors, which are precisely the kind of error processes that we want to study in the present paper.

\section{Non parametric regression with Gaussian dependent errors}
\label{sec::short_long_mem}

In this section we study the fixed design regression problem with dependent Gaussian errors.
Let $f^{\ast}$ be a function in $\mathbb L^\infty([0,1]) $, and recall the equation of model~\eqref{fixedreg}
$$
Y_i = f^{\ast}\left(\frac i n\right) + \varepsilon_i , \quad i \in  \{1, \dots n \},
$$
where $( \varepsilon_1, \dots, \varepsilon_n) \sim \mathcal N _n (0,\Sigma_n) $. 
The aim is to estimate  $f^{\ast}$  thanks to the observations $Y_1, \dots, Y_n$.	

By considering the application 
\[ f \in \mathbb L^{\infty}([0,1])     \mapsto I(f) =  \left( f(1/n), \dots,  f(1) \right)  \in  \R^n,\] 
we can easily associate a linear subspace of $\R^n$ to any linear subspace of $\mathbb L^\infty ([0,1])$.
Slightly abusing the notation, we identify the function $f$ to the vector $I(f)$, and we write
\[\| f\|_n^{2}  = \frac 1n \sum_{i=1}^n f ^2(i/n) = \|  I(f)\|_n^{2}   , \: \textrm{ for  } f \in \mathbb L^\infty ([0,1]) .\]
For $F$  a finite linear subspace of $\mathbb L^\infty ([0,1]) $, we define the least-squares estimator  $\hat{f}$ of $f^\ast$ on  $F$ as
\[ \hat{f}  =   \argmin_{f \in F }  \|  Y - f  \|_n^2\, , 
\quad \text{where} \quad   \| Y - f \|_n^2 = \frac 1n \sum_{i=1}^n (f  (i/n)  - Y_i)^2.
 \]

We shall only consider here the linear spaces $S_m$  of ${\mathbb R}^n$  induced by the linear space
$F_m$ of  $\mathbb L^\infty ([0,1]) $ generated by the family of piecewise polynomials of degree at most $r$ ($r \in {\mathbb N}$)  on the regular partition of size $m$ of the interval $[0,1]$. Obviously, the linear space $S_m$ has dimension $d_m=(r+1)m$; the case $r=0$ corresponds to the regular regressogram of size $m$. 

We denote by $\hat f_m$ the least square estimator of $f^*$ on $F_m$. 

We shall always consider some weights $\pi_m$ of order $m^{-2}$ (suitably normalized in such a way that 
$\sum_{m=1}^n \pi_m=1$). For such weights, the terms involving $\pi_m$ in the general penalty \eqref{eq:pen0} is of order $\rho(\Sigma) \log (m)$;  in the applications given below, it will be negligible with respect to the main term $\tr \left( \Proj_{ S_{m}} \Sigma   \right)$. 

\subsection{The case of short range dependent sequences}
\label{SRD}

In this subsection, we assume that the error process $(\varepsilon_i)_{i \geq 1}$ is  stationary and short-range dependent. By short range dependent, we mean  that 
\begin{equation}\label{boundedradius}
\rho_\varepsilon = \sup_{n \in \N^ *} \rho (\Sigma_n)  < \infty .
\end{equation}
Note that \eqref{boundedradius} is satisfied as soon as the spectral density of $(\varepsilon_i)_{i \geq 1}$ is bounded, which corresponds to the usual definition of short range dependency. 

In this  setting, the model selection procedure is exactly the same as in the i.i.d. framework, by replacing the variance of the errors by the spectral radius in  the penalty. More precisely, we obtain a penalty of the form
\begin{equation*} 
 \pen(m) =K \rho_\varepsilon \frac m  n\, ,
 \end{equation*}
  for some positive constant $K$ depending on the the degree $r$. 
  We now select a model in  $\mathcal M _n$ according to the criterion \eqref{crit}, which can be rewritten as
 \begin{equation}
\hat{m} \in \mathrm{argmin}_{m \in \{1, \ldots, n\}} \left\{ \left \| Y - \hat{f}_{m} \right \|_n^{2} + \pen(m) \right\}.
\label{critnonparam}
\end{equation}
  
Following  \cite{baraud2000model}, we  derive rates of convergence when $f^*$ belongs to some  Besov spaces $\mathcal B_{\alpha,\ell,\infty}$ for $\ell^{-1} <\alpha < r+1 $ and $\ell \geq 2$ (see \cite{devore1993constructive} for the definition of Besov spaces). 
In short,  the approximation term in the risk decomposition of $ \hat f_m$ satisfies (see Sections 4 and 7.4 in \cite{baraud2000model})
\begin{equation}\label{bias}
 \inf_{g \in F_m}  \left\| f^{\ast} -   g \right\|_n^{2}  \leq  C(\alpha, r)  |f^{\ast}|^2_{\alpha,\ell} \left(m^{-2\alpha} + n^{-2\alpha +2/\ell} \right)\, ,
\end{equation}
where $|\cdot |_{\alpha, \ell}$ is the usual norm on $\mathcal B_{\alpha,\ell,\infty}$. 
Balancing the variance term and the approximation terms exactly as in case of i.i.d errors, we end up with the same rate of convergence as in the i.i.d. case

\begin{Cor}
Let $(\ell, \alpha)$ be such that $\alpha \in (0, r+1) $ and $ \ell\geq \max (2, (2\alpha+1)/(2 \alpha^2))$. 
For a stationary Gaussian process satisfying \eqref{boundedradius}, and for the estimator $\hat f _{\hat m}$ selected according to the penalized criterion procedure~\eqref{critnonparam}, 
$$\sup_{|f^\ast|_{\alpha, \ell } \leq L} \mathbb E \left \|f^\ast - \hat f _{\hat m} \right  \|^2_n  \leq C n^{-\frac{2\alpha}{2 \alpha +1}},$$
where  $C$ depends on $\rho_\varepsilon$, $K$, $\alpha$, $\ell$ and $L$.
\end{Cor}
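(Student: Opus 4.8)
The plan is to deduce the corollary directly from the oracle inequality of Theorem~\ref{theo:main_theorem}, applied with $t^{\ast}=I(f^{\ast})$ and $\Sigma=\Sigma_n$, and then to perform the standard bias--variance balance. The first thing I would check is that the announced penalty $\pen(m)=K\rho_\varepsilon\, m/n$ meets the hypothesis of the theorem for a suitable constant $K=K(r)$. With the weights $\pi_m$ of order $m^{-2}$ one has $\log(1/\pi_m)\le C+2\log m$, and since $d_m=(r+1)m$ and $\rho(\Sigma_n)\le\rho_\varepsilon$ by~\eqref{boundedradius}, the right-hand side of \eqref{eq:pen2} is bounded by
\[
K'\,\frac{\rho_\varepsilon}{n}\bigl(\sqrt{(r+1)m}+\sqrt{2\log(1/\pi_m)}\bigr)^2\le K''\,\frac{\rho_\varepsilon\, m}{n},
\]
because $\sqrt{m\log m}=o(m)$ and $\log m=o(m)$. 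Taking $K$ large enough (depending only on $r$), the stated penalty dominates this bound uniformly in $m$, so Theorem~\ref{theo:main_theorem} applies.

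Next I would bound the risk of each individual estimator $\hat f_m$. By the bias--variance decomposition recalled in Section~\ref{sec::general_setting}, the variance term equals $\tfrac1n\tr(\Proj_{S_m}\Sigma_n)\le (r+1)\rho_\varepsilon\, m/n$, while the bias is controlled by \eqref{bias}; note that the assumption $\ell\ge(2\alpha+1)/(2\alpha^2)$ forces $1/\ell<\alpha$, so \eqref{bias} is indeed applicable. Thus, for $|f^{\ast}|_{\alpha,\ell}\le L$,
\[
\E\bigl\|f^{\ast}-\hat f_m\bigr\|_n^2
\le C(\alpha,r)\,L^2\bigl(m^{-2\alpha}+n^{-2\alpha+2/\ell}\bigr)+\frac{(r+1)\rho_\varepsilon\, m}{n}.
\]
Inserting this, together with $\pen(m)=K\rho_\varepsilon\, m/n$ and the residual term $\rho(\Sigma_n)/n\le\rho_\varepsilon/n$, into \eqref{eq:riskbound} gives
\[
\E\bigl\|f^{\ast}-\hat f_{\hat m}\bigr\|_n^2
\le C\inf_{1\le m\le n}\Bigl\{L^2 m^{-2\alpha}+\frac{\rho_\varepsilon\, m}{n}\Bigr\}
+C\Bigl(L^2 n^{-2\alpha+2/\ell}+\frac{\rho_\varepsilon}{n}\Bigr),
\]
where $C$ absorbs $\rho_\varepsilon$, $K$, $\alpha$, $\ell$ and the numerical constants.

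The final step is the optimisation over $m$. Balancing the bias term $m^{-2\alpha}$ against the variance term $m/n$ leads to $m\asymp n^{1/(2\alpha+1)}$, which lies in $\{1,\dots,n\}$ since $0<\alpha<r+1$, and makes both terms of order $n^{-2\alpha/(2\alpha+1)}$. It then remains to verify that the two leftover terms are negligible. The residual $\rho_\varepsilon/n$ is of order $n^{-1}$, hence smaller since $2\alpha/(2\alpha+1)<1$. The only genuinely delicate point---and the single place where the lower bound on $\ell$ is used---is the term $n^{-2\alpha+2/\ell}$: it is dominated by $n^{-2\alpha/(2\alpha+1)}$ exactly when $-2\alpha+2/\ell\le-2\alpha/(2\alpha+1)$, that is, when $\ell\ge(2\alpha+1)/(2\alpha^2)$, which is precisely the hypothesis. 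Since all the analytic work has been delegated to Theorem~\ref{theo:main_theorem} and to the approximation bound \eqref{bias}, no further obstacle arises, and combining these estimates yields the announced rate $n^{-2\alpha/(2\alpha+1)}$, uniformly over the ball $\{|f^{\ast}|_{\alpha,\ell}\le L\}$.
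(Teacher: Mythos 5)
Your proposal is correct and follows essentially the same route the paper intends: apply Theorem~\ref{theo:main_theorem} with the penalty shape \eqref{eq:pen2} (the $\log(1/\pi_m)$ term being absorbed into $K\rho_\varepsilon m/n$), combine with the approximation bound \eqref{bias}, and balance $m^{-2\alpha}$ against $m/n$, with the lower bound on $\ell$ used exactly where you use it, to control $n^{-2\alpha+2/\ell}$. No gaps.
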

This upper bound is known to be the minimax rate of convergence for the estimation of $f^\ast$ in the i.i.d. case. This is satisfactory since a sequence of i.i.d. Gaussian random variables  is of course  short-range dependent. 

As for the Gaussian i.i.d case, the penalty  is defined up to a multiplicative constant $K$. The spectral radius is unknown, as is the variance of the errors in the standard i.i.d. setting. In practice, the penalty is chosen proportional to the model dimension $m$ and calibrated according to the slope heuristic method introduced by
Birg\'e et Massart~\cite{birge2001generalized}, see Section~\ref{subs:slopeh} further.
 
\subsection{The case of long range dependent sequences}
\label{sub:case_long_range}
 
In this subsection, we assume that the error process $(\varepsilon_i)_{i \geq 1}$ is strictly stationary, but we do not assume that  \eqref{boundedradius} holds. Instead, we assume that 
\begin{equation}\label{lrd}
    |\gamma_\varepsilon (k)| \leq \kappa k^{-\gamma},  \quad \text{for some $\kappa >0$ and $\gamma \in (0,1)$,}
\end{equation}
where $\gamma_\varepsilon(k)$ is the auto-covariance $\gamma_\varepsilon(k)= \mathrm{Cov} ( \varepsilon_0, \varepsilon_k)$.
Of course, \eqref{lrd} is only an upper bound, so that it may happen that 
$ \sum_{k>0} |\gamma_\varepsilon(k)| < \infty$; in such a case \eqref{boundedradius} holds and the process in short range dependent. But the interesting case is of course when  $|\gamma_\varepsilon (k)|$ is exactly of order  $k^{-\gamma}$, so that $ \sum_{k>0} |\gamma_\varepsilon(k)| = \infty$. This is what we mean here by long range dependent.
 
To control the main term of the penalty, we shall prove the following lemma

\begin{lem}\label{mainLem}
Let $S_m$ be the linear space  of ${\mathbb R}^n$  induced by the family of piecewise polynomials of degree at most $r$  on the regular partition of size $m$ of the interval $[0,1]$. If \eqref{lrd} holds, then 
$$
 \tr \left( \Proj_{ S_{m}} \Sigma   \right) \leq C m^\gamma n^{1-\gamma}\, ,
$$
where $C$ depends on $\kappa, \gamma$ and $r$. 
\end{lem}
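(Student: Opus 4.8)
The plan is to exploit the block structure of the piecewise polynomial space to localize the computation. Since $F_m$ consists of piecewise polynomials of degree at most $r$ on the regular partition $\{J_j = [(j-1)/m, j/m)\}_{j=1}^m$ with no continuity constraint across cells, the induced space $S_m \subset \mathbb{R}^n$ splits as an orthogonal direct sum $S_m = \bigoplus_{j=1}^m S_m^{(j)}$, where $S_m^{(j)}$ only involves the coordinates $\mathcal{I}_j = \{i : (j-1)/m \leq i/n < j/m\}$. These index sets are pairwise disjoint, so the $S_m^{(j)}$ are mutually orthogonal in $\mathbb{R}^n$ and the projector decomposes as $\Proj_{S_m} = \sum_{j=1}^m P_j$ with $P_j = \Proj_{S_m^{(j)}}$. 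Consequently $\tr(\Proj_{S_m}\Sigma) = \sum_{j=1}^m \tr(P_j \Sigma)$, and it suffices to bound each local term.

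For the local term, I would write $\tr(P_j \Sigma) = \tr(P_j \Sigma P_j)$ (using $P_j = P_j^2 = P_j^T$ and the cyclicity of the trace). Because the range of $P_j$ is contained in the coordinate block $\mathcal{I}_j$, this equals $\tr(\tilde P_j \Sigma_j)$, where $\Sigma_j = (\gamma_\varepsilon(i-i'))_{i,i' \in \mathcal{I}_j}$ is the principal submatrix of $\Sigma$ indexed by $\mathcal{I}_j$ ($n_j := |\mathcal{I}_j|$) and $\tilde P_j$ is the corresponding projector within $\mathbb{R}^{\mathcal{I}_j}$. Evaluating this trace in an orthonormal basis of the range of $\tilde P_j$ and using that this range has dimension at most $r+1$, I obtain the crude but sufficient bound $\tr(P_j \Sigma) \leq (r+1)\, \rho(\Sigma_j)$.

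The main estimate is then to control $\rho(\Sigma_j)$ through the long range dependence assumption \eqref{lrd}. Since $\Sigma_j$ is symmetric, its spectral radius is at most its maximal absolute row sum; as $\mathcal{I}_j$ is a block of $n_j$ consecutive integers, this row sum is bounded by $\sum_{|k| < n_j} |\gamma_\varepsilon(k)| \leq \gamma_\varepsilon(0) + 2\kappa \sum_{k=1}^{n_j - 1} k^{-\gamma}$. For $\gamma \in (0,1)$ the partial sum $\sum_{k=1}^{n_j-1} k^{-\gamma}$ is of order $n_j^{1-\gamma}$ by comparison with $\int_1^{n_j} x^{-\gamma}\,dx$, whence $\rho(\Sigma_j) \leq C_1 n_j^{1-\gamma}$ with $C_1$ depending only on $\kappa$ and $\gamma$. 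This step, where the decay exponent $\gamma$ enters, is the crux of the argument; for $r=0$ it amounts to bounding the variance of a block sum of the $\varepsilon_i$, and the general case differs only by the harmless rank factor $r+1$.

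It remains to sum over $j$. Combining the previous bounds gives $\tr(\Proj_{S_m}\Sigma) \leq (r+1) C_1 \sum_{j=1}^m n_j^{1-\gamma}$. Since the cells partition the design, $\sum_{j=1}^m n_j = n$, and the concavity of $x \mapsto x^{1-\gamma}$ together with Jensen's inequality yields $\sum_{j=1}^m n_j^{1-\gamma} \leq m (n/m)^{1-\gamma} = m^\gamma n^{1-\gamma}$, giving the claim with $C = (r+1)C_1$. I expect no real obstacle beyond the elementary estimate of $\rho(\Sigma_j)$; the only points requiring care are checking that $S_m$ genuinely decomposes along the cells (so that no continuity constraint couples neighbouring blocks) and that the degenerate cells with $n_j < r+1$ cause no trouble, which they do not since the bound $\dim S_m^{(j)} \leq r+1$ and hence $\tr(P_j\Sigma) \leq (r+1)\rho(\Sigma_j)$ remain valid in all cases.
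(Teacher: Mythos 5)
Your proof is correct, and while it shares the paper's overall architecture (orthogonal decomposition of $S_m$ along the cells of the partition, a local estimate on each block, then summation and the concavity bound $\sum_j n_j^{1-\gamma} \leq m^\gamma n^{1-\gamma}$), the local estimate is handled by a genuinely different and cleaner device. The paper writes $\Proj_{V_j}(\varepsilon)$ explicitly in the normalized monomial basis $e_1,\dots,e_{r+1}$, controls the coefficients through the spectral radius of the inverse Gram matrix $(X_1^t X_1)^{-1}$ --- which requires the separate argument that $X_1^t X_1$ converges to an invertible Hilbert-type matrix $A$ as the block length grows, so that $\rho\bigl((X_1^t X_1)^{-1}\bigr)$ stays uniformly bounded --- and then bounds each $\mathbb{E}\bigl((e_k^t\varepsilon)^2\bigr)$ by $\gamma_\varepsilon(0)+2\sum_{j}|\gamma_\varepsilon(j)|$ via stationarity and Cauchy--Schwarz. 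You bypass the basis and the Gram matrix entirely with the abstract inequality $\tr(P_j\Sigma)\leq \operatorname{rank}(P_j)\,\rho(\Sigma_j)\leq (r+1)\rho(\Sigma_j)$, and then bound $\rho(\Sigma_j)$ by the maximal absolute row sum of the local covariance block --- the same Gershgorin-type argument the paper applies only to the full matrix $\Sigma_n$. The quantitative content of the two local bounds is identical (both reduce to $\gamma_\varepsilon(0)+2\kappa\sum_{k\leq n_j}k^{-\gamma}\lesssim n_j^{1-\gamma}$), but your route avoids the limit argument for the Gram matrix and handles the degenerate cells $n_j<r+1$ for free; the paper's more explicit computation has the advantage of connecting directly to the variance-of-block-sums formulation that it reuses for the regressogram case in Lemma 3.2.
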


Moreover, by the classical Gerschgorin theorem, we easily see that
$$
\rho(\Sigma_n)  \leq B n^{1-\gamma} \, ,
$$
where $B$ depends on $\kappa$ and $\gamma$. Combining this last bound with Lemma \ref{mainLem}, we infer from  \eqref{eq:pen0} that one can choose a penalty of the form
\begin{equation*} 
 \pen(m) =K  \frac{m^\gamma}{n^\gamma} \, ,
\end{equation*}
for some positive constant $K$ depending on $\kappa, \gamma$ and $r$.
 
Now, since the bias term \eqref{bias} is still valid for any function $f^*$ in the Besov space $\mathcal B_{\alpha,\ell,\infty}$ (with $\ell^{-1} <\alpha < r+1 $ and $\ell \geq 2$), we can proceed as in Section \ref{SRD} to get the rate of convergence of the estimator $\hat f _{\hat m}$. The difference is that the bias-variance problem consists of  balancing two terms of order 
$$
     \frac{1}{m^{2\alpha}} \ \text{(bias)} \quad  \text{and} \quad  \frac{m^\gamma}{n^\gamma}\  \text{(variance).}
$$
This leads to the following corollary
 
\begin{Cor}
Let $(\ell, \alpha)$ be such that $\alpha  \in (0, r+1)$ and $ \ell\geq \max(2, (2\alpha+\gamma)/(2 \alpha^2))$. 
For a stationary Gaussian process satisfying \eqref{lrd}, and for the estimator $\hat f _{\hat m}$ selected according to the penalized criterion procedure~\eqref{critnonparam}, 
$$\sup_{|f^\ast|_{\alpha, \ell } \leq L} \mathbb{E} \left \|f^\ast - \hat f _{\hat m}  \right \|^2_n  \leq C n^{-\frac{2\alpha \gamma}{2 \alpha +\gamma}},$$
where  $C$ depends on  $\gamma$, $K$, $\alpha$, $\ell$ and $L$.
\end{Cor}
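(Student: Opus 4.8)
The plan is to combine the oracle inequality of Theorem~\ref{theo:main_theorem} with the approximation bound~\eqref{bias} and the variance control of Lemma~\ref{mainLem}, and then to perform the usual bias--variance tradeoff by optimizing over the partition size $m$. First I would invoke Theorem~\ref{theo:main_theorem} with the weights $\pi_m$ of order $m^{-2}$ and the penalty $\pen(m)=K m^\gamma/n^\gamma$, whose admissibility with respect to~\eqref{eq:pen0} has already been established above by combining Lemma~\ref{mainLem} with the Gerschgorin bound $\rho(\Sigma_n)\leq B n^{1-\gamma}$ (the term in $\log(1/\pi_m)\asymp\log m$ being of smaller order than $m^\gamma n^{1-\gamma}$). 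This yields
$$
\E\left\|f^{\ast} - \hat f_{\hat m}\right\|_n^2 \leq C\left(\inf_{m}\left\{\E\left\|f^{\ast} - \hat f_m\right\|_n^2 + \pen(m)\right\} + \frac{\rho(\Sigma_n)}{n}\right).
$$

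Next I would bound the risk of each $\hat f_m$ through its bias--variance decomposition. The bias is controlled by~\eqref{bias}, uniformly over the Besov ball $\{|f^{\ast}|_{\alpha,\ell}\leq L\}$, by a term of order $L^2\bigl(m^{-2\alpha} + n^{-2\alpha+2/\ell}\bigr)$, while the variance $\tfrac1n\tr(\Proj_{S_m}\Sigma)$ is of order $m^\gamma/n^\gamma$ by Lemma~\ref{mainLem}. Since the variance is exactly of the order of the penalty, the whole bracket is of order $m^{-2\alpha} + m^\gamma/n^\gamma + n^{-2\alpha+2/\ell}$. Minimizing the two main terms $m^{-2\alpha}+m^\gamma/n^\gamma$ over $m$ leads to the choice $m \asymp n^{\gamma/(2\alpha+\gamma)}$, which lies in $\{1,\dots,n\}$ for $n$ large enough since $\gamma/(2\alpha+\gamma)\in(0,1)$; both balanced terms are then of order $n^{-2\alpha\gamma/(2\alpha+\gamma)}$, the announced rate. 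The integer-part rounding of this optimal $m$ affects only the constant.

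The remaining step, and the only place where the hypotheses on $\ell$ and $\gamma$ are used, is to check that the two residual contributions are negligible compared with $n^{-2\alpha\gamma/(2\alpha+\gamma)}$. For the Besov boundary term this requires $-2\alpha+2/\ell \leq -2\alpha\gamma/(2\alpha+\gamma)$, which rearranges to $2/\ell \leq 4\alpha^2/(2\alpha+\gamma)$, i.e.\ precisely $\ell \geq (2\alpha+\gamma)/(2\alpha^2)$, the stated condition. For the additive remainder $\rho(\Sigma_n)/n \leq B n^{-\gamma}$ it suffices that $\gamma \geq 2\alpha\gamma/(2\alpha+\gamma)$, which holds automatically because $\gamma>0$ forces $2\alpha/(2\alpha+\gamma)\leq 1$. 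I do not anticipate a genuine obstacle here: all the hard analytic content is contained in Lemma~\ref{mainLem}, and once it is granted the argument reduces to the classical bias--variance optimization, the only real care being the bookkeeping on exponents to confirm that the hypothesis on $\ell$ is exactly what makes the approximation remainder harmless.
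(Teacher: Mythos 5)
Your proposal is correct and follows exactly the route the paper takes (which it only sketches): admissibility of the penalty $Km^\gamma/n^\gamma$ via Lemma~\ref{mainLem} and the Gerschgorin bound, the oracle inequality of Theorem~\ref{theo:main_theorem}, the Besov approximation bound~\eqref{bias}, and the balance $m^{-2\alpha}\asymp m^\gamma/n^\gamma$ giving $m\asymp n^{\gamma/(2\alpha+\gamma)}$. Your bookkeeping on the exponents — in particular that the condition $\ell\geq(2\alpha+\gamma)/(2\alpha^2)$ is precisely what makes the boundary term $n^{-2\alpha+2/\ell}$ harmless, and that $\rho(\Sigma_n)/n\lesssim n^{-\gamma}$ is automatically negligible — is accurate and actually more explicit than the paper's own treatment.
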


This rate is satisfactory, since it corresponds to the minimax rates described in the same setting by Wang \cite{Wang1996LongMemo} when $\gamma_\varepsilon(k)$ is exactly of order $k^{-\gamma}$. Note however that the minimax rate in \cite{Wang1996LongMemo} is written for the usual ${\mathbb L}^2([0,1])$-norm.

Let us make some additional comments: if the exponent $\gamma$ is known, then the slope heuristic can still be used to calibrate the other constants in the penalty term. We shall see that it works pretty well in the simulation section and we will also investigate the calibration of $\gamma$ for the more general and difficult framework where the exponent $\gamma$ is unknown.

\subsection{Regular regressograms and anti-persistent errors}
  \label{sub:case_very_short_range}

We now assume that the sequence $(\varepsilon_i)_{i \geq 1}$ is stationary and anti-persistent in the following sense: there exist a parameter $\gamma \in (1,2)$ and a positive constant $\kappa$ such that Condition~\eqref{boundedradius} holds and
\begin{equation}\label{vsd} \quad \text{Var} \left (  \sum_{k=1}^n \varepsilon_k \right ) \leq \kappa n^{2- \gamma} \, .
\end{equation}
For instance, Conditions~\eqref{boundedradius}  and~\eqref{vsd} hold if $(\varepsilon_i)_{i \geq 1}$ is a fractional Gaussian noise with Hurst index $H \in (0, 1/2)$ (see Section~\ref{subs:slopeh} for the definition of the Hurst index). In that case, $\gamma=2-2H$. The term anti-persistent is borrowed from this particular case. 

In this subsection, we only consider the case of regular regressograms, which corresponds to  estimators  via piecewise polynomials of degree 0 on a regular partition of $[0,1]$. 

To control the main term of the penalty, we shall prove the following lemma

\begin{lem}\label{newLem}
Let $S_m$ be the linear space  of ${\mathbb R}^n$  induced by the family of indicators of intervals  on the regular partition of size $m$ of the interval $[0,1]$. If Conditions~\eqref{boundedradius}  and~\eqref{vsd} hold, then 
$$
 \tr \left( \Proj_{ S_{m}} \Sigma   \right) \leq C m^\gamma n^{1-\gamma}\, ,
$$
where $C$ depends on $\kappa$ and $\gamma$. 
\end{lem}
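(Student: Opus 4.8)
The plan is to exploit the very simple geometry of the degree-$0$ projection. Since $S_m$ is spanned by the indicator vectors of the blocks $B_j=\{1\le i\le n : i/n\in I_j\}$ attached to the regular partition $(I_j)_{1\le j\le m}$ of $[0,1]$, and since these vectors have pairwise disjoint supports, the projection matrix splits as $\Proj_{S_m}=\sum_{j=1}^m v_jv_j^\top/n_j$, where $v_j$ is the $0/1$ indicator vector of $B_j$ and $n_j=\#B_j=\|v_j\|^2$ in the (non-normalized) Euclidean norm; the $1/n$ factor in $\|\cdot\|_n$ scales the inner product by a positive constant and hence leaves the orthogonal projection unchanged. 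Taking the trace against $\Sigma$ and using $\tr(v_jv_j^\top\Sigma)=v_j^\top\Sigma v_j$, I would first record the identity
\[
\tr\left(\Proj_{S_m}\Sigma\right)=\sum_{j=1}^m\frac{1}{n_j}\,v_j^\top\Sigma v_j .
\]

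The key observation is that each quadratic form is a block variance: $v_j^\top\Sigma v_j=\sum_{i,i'\in B_j}\mathrm{Cov}(\varepsilon_i,\varepsilon_{i'})=\Var\!\big(\sum_{i\in B_j}\varepsilon_i\big)$. Because $I_j$ is an interval, $B_j$ is a set of \emph{consecutive} integers, so by stationarity this variance equals $\Var(\varepsilon_1+\cdots+\varepsilon_{n_j})$, which Condition~\eqref{vsd} bounds by $\kappa\,n_j^{2-\gamma}$. Dividing by $n_j$, each summand is at most $\kappa\,n_j^{1-\gamma}$, whence $\tr(\Proj_{S_m}\Sigma)\le\kappa\sum_{j=1}^m n_j^{1-\gamma}$.

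It then remains to show $\sum_{j=1}^m n_j^{1-\gamma}\le C\,m^\gamma n^{1-\gamma}$, and here the regularity of the partition is essential: since $\sum_j n_j=n$ and the block sizes are nearly equal, one has $n_j\in\{\lfloor n/m\rfloor,\lceil n/m\rceil\}$, and in particular $n_j\ge\lfloor n/m\rfloor\ge 1$ for $m\le n$. The plan is to use that $x\mapsto x^{1-\gamma}$ is decreasing (as $\gamma>1$) to replace each $n_j$ by its lower bound, giving $\sum_j n_j^{1-\gamma}\le m\,\lfloor n/m\rfloor^{1-\gamma}$, and then to split into the regime $m\le n/2$, where $\lfloor n/m\rfloor\ge n/(2m)$ yields a right-hand side $\le 2^{\gamma-1}m^\gamma n^{1-\gamma}$, and the regime $n/2<m\le n$, where $\lfloor n/m\rfloor=1$ and the elementary inequality $m\le 2^{\gamma-1}m^\gamma n^{1-\gamma}$ (equivalently $(m/n)^{1-\gamma}\le 2^{\gamma-1}$) closes the estimate. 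Collecting constants gives the claim with $C=\kappa\,2^{\gamma-1}$, depending only on $\kappa$ and $\gamma$.

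The main obstacle — really the only delicate point — is precisely this last combinatorial step. Because the exponent $1-\gamma$ is negative, $\sum_j n_j^{1-\gamma}$ is dominated by the \emph{smallest} blocks, so a generic constraint $\sum_j n_j=n$ with $n_j\ge 1$ would be maximized by a very unbalanced configuration (many singletons) and would be far too large. It is therefore crucial to use the near-uniform lower bound $n_j\ge\lfloor n/m\rfloor$ furnished by the regular partition, and to treat with care the boundary regime where $m$ is close to $n$ and blocks of size $1$ genuinely occur. I note finally that Condition~\eqref{vsd} together with stationarity is all that this particular estimate uses; Condition~\eqref{boundedradius}, although assumed throughout the subsection, plays no role in the proof of this lemma.
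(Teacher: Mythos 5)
Your proof is correct and follows essentially the same route as the paper: decompose $\Proj_{S_m}$ over the orthogonal block indicators, identify each block's contribution to the trace with $\Var(\varepsilon_1+\cdots+\varepsilon_{\ell_j})/\ell_j \le \kappa\,\ell_j^{1-\gamma}$ via Condition~\eqref{vsd} and stationarity, and sum using $\ell_j \in \{[n/m],[n/m]+1\}$. You merely spell out the final combinatorial bound $\sum_j \ell_j^{1-\gamma} \le C m^\gamma n^{1-\gamma}$ (which the paper leaves implicit) and correctly observe that Condition~\eqref{boundedradius} is not used in this particular estimate.
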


We infer from  \eqref{eq:pen0} that one can choose a penalty of the form
\begin{equation*} 
 \pen(m) =K  \left (\frac{m^\gamma}{n^\gamma}  + \frac{\log (m)}{n} \right )\, ,
\end{equation*}
for some positive constant $K$ depending on $\kappa, \gamma$ and $\rho_\varepsilon$ (recall that $\rho_\varepsilon$ is the constant appearing in \eqref{boundedradius}). 
 
Now, since the bias term \eqref{bias} is still valid for any function $f^*$ in the Besov space $\mathcal B_{\alpha,\ell,\infty}$ (with $\ell^{-1} <\alpha < 1 $ and $\ell \geq 2$), we can proceed as in Section~\ref{SRD} to get the rate of convergence of the estimator $\hat f _{\hat m}$.
This leads to the following corollary
 
\begin{Cor}\label{vsdCor}
Let $(\ell, \alpha)$ be such that $\alpha  \in (0, 1)$ and $ \ell\geq \max(2, (2\alpha+\gamma)/(2 \alpha^2))$. 
For a stationary Gaussian process satisfying Conditions~\eqref{boundedradius}  and~\eqref{vsd}, and for the estimator $\hat f _{\hat m}$ selected according to the penalized criterion procedure~\eqref{critnonparam}, 
$$\sup_{|f^\ast|_{\alpha, \ell } \leq L} \mathbb{E} \left \|f^\ast - \hat f _{\hat m}  \right \|^2_n  \leq C n^{-\frac{2\alpha \gamma}{2 \alpha +\gamma}},$$
where  $C$ depends on  $\gamma$, $K$, $\alpha$, $\ell$ and $L$.
\end{Cor}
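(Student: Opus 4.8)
The plan is to deduce Corollary~\ref{vsdCor} from the general oracle inequality of Theorem~\ref{theo:main_theorem} together with the trace bound of Lemma~\ref{newLem}, exactly as in Sections~\ref{SRD} and~\ref{sub:case_long_range}, the only change being the precise shape of the penalty and the exponent appearing in the bias--variance balance.

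First I would check that the announced penalty $\pen(m)=K\big((m/n)^\gamma+\log(m)/n\big)$ indeed dominates the theoretical lower bound~\eqref{eq:pen0}, so that Theorem~\ref{theo:main_theorem} applies. Using $(a+b)^2\le 2a^2+2b^2$, the right-hand side of~\eqref{eq:pen0} is at most
\[
\frac{2K}{n}\Big(\tr\big(\Proj_{S_m}\Sigma\big)+\rho(\Sigma)\Big)+\frac{4K}{n}\,\rho(\Sigma)\log\!\Big(\tfrac1{\pi_m}\Big).
\]
Here Lemma~\ref{newLem} gives $\tr(\Proj_{S_m}\Sigma)\le C m^\gamma n^{1-\gamma}$, so that $\tr(\Proj_{S_m}\Sigma)/n\le C(m/n)^\gamma$; Condition~\eqref{boundedradius} gives $\rho(\Sigma)\le\rho_\varepsilon$; and with the weights $\pi_m$ of order $m^{-2}$ one has $\log(1/\pi_m)=O(\log m)$. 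Substituting these three bounds shows that the displayed quantity is $\le K'\big((m/n)^\gamma+\log(m)/n\big)$ for a constant $K'$ depending on $K$, $\kappa$, $\gamma$ and $\rho_\varepsilon$, which is a penalty of the proposed shape. Note that it is Condition~\eqref{boundedradius} that is decisive at this step: it keeps $\rho(\Sigma)$ bounded, so that the $\rho(\Sigma)\log(1/\pi_m)/n$ contribution is only of order $\log(m)/n$.

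Granting this, Theorem~\ref{theo:main_theorem} yields $\E\|f^\ast-\hat f_{\hat m}\|_n^2\le C\big(\inf_{m}\{\E\|f^\ast-\hat f_{m}\|_n^2+\pen(m)\}+\rho_\varepsilon/n\big)$. I would then bound the risk of each $\hat f_m$ by its bias--variance decomposition: the variance term equals $\tr(\Proj_{S_m}\Sigma)/n\le C(m/n)^\gamma$ by Lemma~\ref{newLem}, and the bias term is controlled by~\eqref{bias}, which for degree $r=0$ and $f^\ast\in\mathcal B_{\alpha,\ell,\infty}$ with $\alpha\in(0,1)$ gives $C(\alpha)\,|f^\ast|_{\alpha,\ell}^2\,(m^{-2\alpha}+n^{-2\alpha+2/\ell})$. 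Thus, up to constants and the negligible residual term, the infimum is at most $\inf_m\{m^{-2\alpha}+(m/n)^\gamma\}$ plus the leftover terms $\log(m)/n$ and $n^{-2\alpha+2/\ell}$. Balancing the two leading terms $m^{-2\alpha}$ and $(m/n)^\gamma$ leads to the choice $m\asymp n^{\gamma/(2\alpha+\gamma)}$, which lies in $\{1,\dots,n\}$ since $\gamma/(2\alpha+\gamma)<1$, and produces the rate $n^{-2\alpha\gamma/(2\alpha+\gamma)}$.

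It remains to verify that the two leftover terms do not spoil this rate. Since $2\alpha\gamma/(2\alpha+\gamma)<1$ for $\alpha<1$ and $\gamma<2$, the rate decays strictly slower than $1/n$, so $\log(m)/n=O(\log n/n)$ is negligible. For the residual bias term $n^{-2\alpha+2/\ell}$, an elementary computation shows that $-2\alpha+2/\ell\le -2\alpha\gamma/(2\alpha+\gamma)$ is equivalent to $\ell\ge(2\alpha+\gamma)/(2\alpha^2)$, which is exactly the assumption made; hence this term is also absorbed. Putting everything together gives the announced uniform bound over $\{|f^\ast|_{\alpha,\ell}\le L\}$. \textbf{The only genuinely delicate point} is the penalty-domination check of the second paragraph, where one must make sure the unbounded-looking $\log$ contribution stays of order $\log(m)/n$; this is precisely where the boundedness of the spectral radius, Condition~\eqref{boundedradius}, is used, and it is the reason this assumption is imposed in addition to~\eqref{vsd} in the anti-persistent regime.
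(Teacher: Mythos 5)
Your proof is correct and takes essentially the same route as the paper: Lemma~\ref{newLem} to bound $\tr(\Proj_{S_m}\Sigma)$, the general oracle inequality of Theorem~\ref{theo:main_theorem}, the bias bound \eqref{bias} with $r=0$, and the bias--variance balance at $m\asymp n^{\gamma/(2\alpha+\gamma)}$ (the paper simply says ``proceed as in Section~\ref{SRD}'', so you have filled in the same steps it intends). The only quibble --- shared with the paper's own statement of the penalty --- is the boundary case $m=1$, where $\log(1)=0$ and $(1/n)^\gamma=o(1/n)$ for $\gamma>1$, so the proposed penalty does not literally dominate the $\rho(\Sigma)/n$ term in \eqref{eq:pen0}; this is harmless and does not affect the rate.
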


It is interesting to notice that, for a regularity $\alpha <1$, the rate of convergence given in Corollary 
\ref{vsdCor} is faster than in the case where the sequence $(\varepsilon_i)_{i \geq 1}$ is i.i.d.


\section{Numeric experiments}
\label{sec::simus}

\subsection{Slope heuristics} \label{subs:slopeh}

For the results given in the previous sections, the penalty functions are known, in the best case, up to a multiplicative constant. The aim of the slope heuristics method proposed by Birg\'e and  Massart~ \cite{birge2007minimal} is precisely to calibrate a penalty function for model selection purposes. See \cite{baudry2012slope} and ~\cite{arlot2019minimal} for a general presentation of the method. This method has shown very good performances and comes with mathematical guarantees for non parametric Gaussian regression with i.i.d. error terms,  see \cite{birge2007minimal,arlot2019minimal} and references therein. The slope heuristics have several versions (see ~\cite{arlot2019minimal}). In this paper we use the dimension jump algorithm, which is implemented for instance in the R package {\ttfamily capush}. 

The aim is to tune the constant $\kappa$ in a penalty of the form $\pen(m) = \kappa \pen_{\tiny \mbox{shape}}(m)$ where $\pen_{\tiny \mbox{shape}}$ is a known penalty shape. In the most standard cases, $\pen_{\tiny \mbox{shape}}$  is the dimension of the model. Let  $\hat{m}(\kappa)$ be the model selected by the penalized criterion with constant $\kappa$
\[\hat{m}(\kappa) \in \mathrm{argmin}_{m \in \mathcal{M}} \left\{ \frac1n  \left \| Y - \hat{f}_{m} \right \|_n^{2} +  \kappa \pen_{\tiny \mbox{shape}} (m) \right\}.\]
The Dimension Jump algorithm consists of the following steps (see Figure~\ref{fig:arma_contrast_dj} for an illustration)
\begin{enumerate}
\item  Compute $ \kappa  \mapsto  \hat{m}( \kappa )$,
\item Find the constant ${\hat \kappa}^{dj} > 0$ that corresponds to the highest jump of the function $\kappa \rightarrow d_{\hat{m}(\kappa)}$,
\item Select the model $\hat{m}(2 {\hat \kappa}^{dj})$, 
  $$\hat{m} \in \mathrm{argmin}_{m \in \mathcal{M}} \left\{ \left \| Y - \hat{f}_{m} \right \|_n^{2} + 2 {\hat \kappa}^{dj} \pen_{\tiny \mbox{shape}} (m)\right\} .$$
\end{enumerate}

\subsection{Presentation of the experiments} \label{subs:slopeh}

We simulate $n$ observations according to the following generative model on $[0,1]$
\begin{equation}
\label{eq:simu}
Y_i  =  f^\ast \left (\frac i n \right)   + \varepsilon_{i}, \quad   i = 1 \dots n  .
\end{equation}
In the simulations we take for  $ f^\ast $ the function
\begin{equation*}
f^\ast : t \in [0,1]   \mapsto   3 - 0.1*t + 0.5*t^2 - t^3 + \sin(8*t) .
\end{equation*}

The aim is to estimate $f^\ast $ on a regular partition of size $m$, for $m \in \{1, \ldots, 200\}$. 
We simulate $n$ observations $\varepsilon$ according to an ARMA  process, a Fractional Gaussian process and a non Gaussian Markov chain. The last framework allows us to evaluate the robustness of the model selection procedure without the Gaussian assumption.  We consider samples of size $n= 200$, $n= 500$, $n=2000$ , $n=5000$ and the risk of each regressogram is computed over $100$ simulations. 

\medskip

We now give more details on the error processes we use for the simulations.
\begin{itemize}
\item[•] \textbf{ARMA process.} The ARMA(2,1) short memory process is defined by
\begin{equation}
\varepsilon_{i} - 0.3 \varepsilon_{i-1} - 0.1 \varepsilon_{i-2} = W_{i} + 0.2 W_{i+1},
\label{arma21_process}
\end{equation}
where $(W_{i})_{i \in \mathbb{Z}}$ is a sequence of i.i.d. $\mathcal{N}(0,1)$ random variables.
\item[•] \textbf{Fractional Gaussian Noise.} 
The Fractional Gaussian Noise (FGN, see for instance \cite{MvN68} and \cite{Ber94}) is a stationary sequence $(\varepsilon_i)_{i \geq 1}$ of zero-mean Gaussian random variables with auto--covariances 
$$
\gamma_\varepsilon (k)= \frac{\sigma^2}{2} \left ( |k+1|^{2H}-2|k|^{2H} +|k-1|^{2H}  \right), \quad \text{for } k\in \mathbb N,
$$
where $\sigma^2= \gamma_\varepsilon(0)= \text{Var}(\varepsilon_i)$,  and $H \in (0, 1)$ is the so-called Hurst parameter. If $H=0$, the sequence $(\varepsilon_i)_{i \geq 1}$ is   a Gaussian white noise with variance $\sigma^2$. For any $H \in (0,1)$ the following asymptotic expansion is valid
$$
    \gamma_\varepsilon (k) \sim \sigma^2 H(2H-1) k^{2(H-1)} \, . 
$$
Consequently, if $H>1/2$, the process is positively correlated and long-range dependent. If $H<1/2$, the process is negatively correlated and   $\sum_{k\geq 0} |\gamma_\varepsilon (k) | < \infty$, so that \eqref{boundedradius} holds and the process is short-range dependent. 

In fact, for $H<1/2$, the FGN $(\varepsilon_i)_{i \geq 1}$   is anti-persistent in the sense of Definition \eqref{vsd} (with $\gamma= 2-2H$ in Definition \eqref{vsd}). This is well known (see for instance \cite{Ber94}), and follows from the fact that the $\varepsilon_i$'s are the increments of a fractional Brownian motion $B_H$, that is for $i=1, 2, \ldots$
$$
\varepsilon_i= B_H(i)-B_H(i-1),   \quad \text{with } \text{Var}(B_H(t))= \sigma^2 t^{2H}.
$$

In the simulations, we shall consider two cases
\begin{itemize}
\item[-] an anti-persistent case, with $H=0.2$,
\item[-] a long  memory case, with $H=0.7$.
\end{itemize}
\medskip

\item[•] \textbf{Non Gaussian Markov chain.} 
We start from the Markov chain introduced by Doukhan, Massart and Rio \cite{DMR94}. 

Let $a$ be a positive real number, 
let $\nu$ be the probability with density $x \rightarrow (1+a)x^a {\bf 1}_{[0,1]}$ and $\pi$ be the probability with density $x \rightarrow ax^{a-1}{\bf 1}_{[0,1]}$. We define now a strictly stationary Markov chain by specifying its transition
probabilities $K(x,A)$ as follows
\[ K(x,A)=(1-x)\delta_{x}(A)+x\nu(A)\, ,\]
where $\delta_{x}$ denotes the Dirac measure at point $x$. Then $\pi$ is the unique invariant probability measure of the chain with transition probabilities $K(x, \cdot)$. Let $(Z_i)_{i \in {\mathbb Z}}$ be the stationary Markov chain on $[0,1]$ with transition probabilities $K(x, \cdot)$ and invariant distribution $\pi$. Recall that the $\beta$-mixing coefficients of the chain $(Z_i)_{i \geq 1}$ are defined by 
$$
    \beta_Z(n) = \int \| K^n(x,\cdot)- \pi \|_v \,  \pi (dx), 
$$
where $\|\cdot \|_v$ is the variation norm. 
From  \cite{DMR94}, we know that 
 $\beta_Z(n)\sim \frac{1}{n^a}$. One can easily check than $Z_i^a$ is uniformly distributed over $[0,1]$, so that 
$$
\varepsilon_i= Z_i^a -0.5
$$
is a stationary Markov chain (as an invertible function of a stationary Markov chain), with mean zero and mixing coefficient $\beta(k) \sim \frac{1}{n^a}$. This chain is short range dependent if $a>1$ and long-range dependent if $a \in (0,1)$ (see for instance \cite{DGM18} for a deeper discussion on this subject). 

In the simulations, we shall consider three cases
\begin{itemize}
\item[-] two short memory cases, with $a=8$ and $a=1.5$,
\item[-] a long  memory case, with $a=0.5$.
\end{itemize}
\end{itemize}

In fact, for  regressograms on a regular partition of size $m$,  the main term of the penalty can be exactly determined by the behavior of Var($\varepsilon_1 + \cdots  + \varepsilon_n)$ (see the proof of  Lemma \ref{newLem}). More precisely, if 
\begin{equation*}
    \text{Var} \left (  \sum_{k=1}^n \varepsilon_k \right ) \sim \kappa n^{2- \gamma} \, ,
\end{equation*}
for some $\gamma \in (0, 2)$, then the  main term of the penalty will be of order $(m/n)^\gamma$.
We then see that $\gamma$ is related to the usual Hurst index $H$  (see for instance \cite{Ber94})  of the partial sum process
$$
S_n= \varepsilon_1 + \cdots + \varepsilon_n \, ,
$$
via the equality $\gamma= 2-2H$. Hence, for regressograms on a regular partition of size $m$, the main term of the penalty is of order  $(m/n)^{2-2H}$. 

This remains  true for estimators based on piecewise polynomial of degree $r \geq 1$ when $\gamma_\varepsilon (n) \sim  \kappa n^{- \gamma}$ for $\gamma \in (0, 1)$ : again the penalty is of order  $(m/n)^{2-2H}$ with 
$\gamma= 2-2H$ (see Subsection \ref{sub:case_long_range}). However for anti-persistent errors in the sense of \eqref{vsd}, the penalty term cannot be computed as precisely as for regressograms, and is of the usual order $m/n$ (as in the usual short range dependent case). \\

For long range dependent Gaussian processes, the variance terms of the risk are not linear  functions of the dimension, they behave as $m^\gamma$ for some $\gamma \in (0,1)$. Figure~\ref{fig:FG_compare_risk_shapes} shows the risk of the regressograms for observations simulated according to \eqref{eq:simu} with  the error process following a Fractional Gaussian distribution with Hurst exponents between $0.1$ and $0.9$. 

For anti-persistent cases ($H<0.5$), the risk has a convex behavior for large dimensions, in accordance with a variance term of order $m^{2-2H}$ (see Section~\ref{sub:case_very_short_range}). For the i.i.d. case ($H=0.5$), the risk is linear for high dimensions. For the long range dependent cases ($H>0.5$), the risk shows a concave behavior for large dimensions, in accordance with a variance term of order $m^{2-2H}$ (see Section~\ref{sub:case_long_range}).

\begin{figure}%
    \centering
\includegraphics[height=9cm, width=13cm]{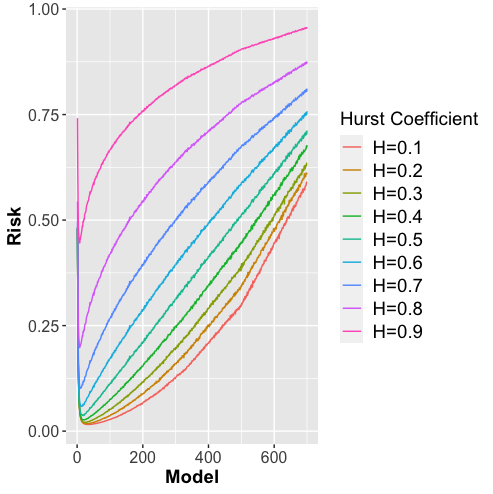}
 \caption{Comparison of risk shapes for the fractional Gaussian process with Hurst coefficient between 0.1 and 0.9, and for $n=2000$.}
    \label{fig:FG_compare_risk_shapes}%
\end{figure}

Figure~\ref{fig:chain_compare_risk_shapes} shows the risk of the regressograms for observations simulated according to~\eqref{eq:simu}, when the error process is the $\beta$-mixing Markov chains described above with a parameter $a$ between $0.3$ and $10$. We remark a concave behavior for long range dependent processes ($a<1$) and a linear behavior in the short range dependent case ($a>1$). This suggests that the theoretical results obtained in Sections~\ref{SRD} and~\ref{sub:case_long_range} could be also valid in non Gaussian contexts.\\

\begin{figure}%
    \centering
\includegraphics[height=9cm, width=13cm]{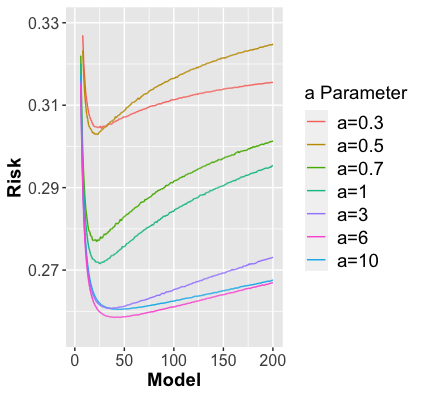}
 \caption{Comparison of risk shapes for the Markov chain, for $n=2000$.}
    \label{fig:chain_compare_risk_shapes}%
\end{figure}

\medskip

For the simulations, we use the Whittle MLE-estimator~\cite{whittle1953estimation} 
implemented in the {\ttfamily longmemo} package, to estimate the Hurst index $H$. We compare several approaches
\begin{itemize}
\item {\bf CDJ}: Classical Dimension Jump method with a penalty shape proportional to the dimension.
\item {\bf HGiven}: Dimension Jump for the penalty shape $m^{2-2H}$ with Hurst exponent $H$ given.
\item {\bf Wh(Y)}: Dimension Jump for the penalty shape $m^{2-2 \hat H}$ where $\hat H$ is the Whittle estimator 
computed on the $Y$ process.
\item {\bf Wh(Res)}: Dimension Jump for the penalty shape $m^{2 - 2 \hat H}$ where $\hat H$ is the Whittle estimator 
computed on the residuals of a model. \newline
\noindent For the method Wh(Res), we have to propose a model $m_0$ for which the Hurst exponent is computed on the residuals. Roughly speaking, the idea is to estimate the Hurst exponent in a sufficiently large model for which the bias is negligible.  
We propose a two step procedure, which is based on the selection of a pre-model $\hat m_1$  to estimate the Hurst exponent $H$ on the residuals of $\hat m_1$. This provides an estimator $\hat H$ which is used to design the penalty shape. The dimension jump is then used to select the final model $\hat m$. We propose two versions for this two-step procedure:
\begin{itemize}
\item[-] \textbf{CDJ+Wh(Res)}: Classical Dimension Jump to find a pre-model $\hat m_1$, then Whittle estimator $\hat	 H$ to estimate the Hurst exponent and finally  Dimension Jump with penalty shape $m^{2-2 \hat H}$.
\item[-] \textbf{Wh(Y)+Wh(Res)}: Dimension Jump with penalty shape $m^{2 - 2 \hat H_1}$ where $\hat H_1$ is the Whittle estimator on $Y$, this selects a pre-model $\hat m_1$, then Whittle estimator $\hat H_2$ on the residuals of the model $\hat m_1$ and finally  Dimension Jump with penalty shape $m^{2-2 \hat H_2}$.
\end{itemize}
\end{itemize}
 
\subsection{Short range dependence}
 

In this section we study the performance of the model selection method in the short dependence framework. The penalty shape is chosen proportional to the model dimension, as in the i.i.d. case and we can apply the classical dimension jump method (CDJ) to calibrate $\kappa$. 
\noindent Roughly speaking, the slope heuristics relies, among other assumptions, on the fact that the empirical contrast behaves in high dimension as a linear function of the penalty shape. 

We also compare the performances of the CDJ method with the ones of the other approaches. As we shall see, other methods can give better results for $n$ small.\\

$\bullet$ \textbf{Gaussian ARMA process}\\

We begin with the classical ARMA(2,1) short memory process defined in~\eqref{arma21_process}.
Figure~\ref{fig:arma_contrast} shows the behavior of the empirical contrast for $n = 2000$ and an illustration of the dimension jump algorithm. As expected by the slope heuristics, a  linear behavior of the empirical contrast can be observed in high dimensions ($m \geq 25$).
 \begin{figure}%
    \centering
    \subfloat[Linear behavior of the empirical contrast ($n=2000$).]
    {\label{fig:arma_contrast_linear}  \includegraphics[scale=0.47]{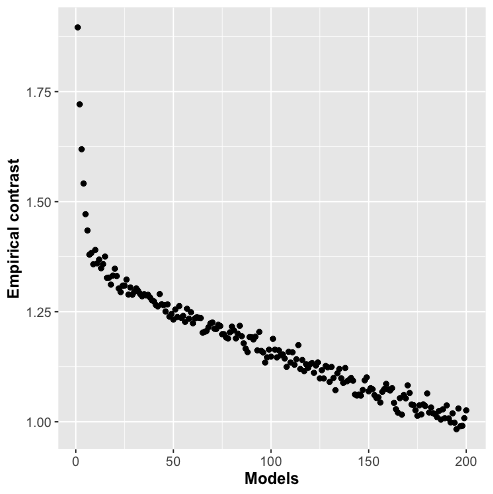}}
    \subfloat[Dimension Jump.]
    { \label{fig:arma_contrast_dj} \includegraphics[scale=0.47]{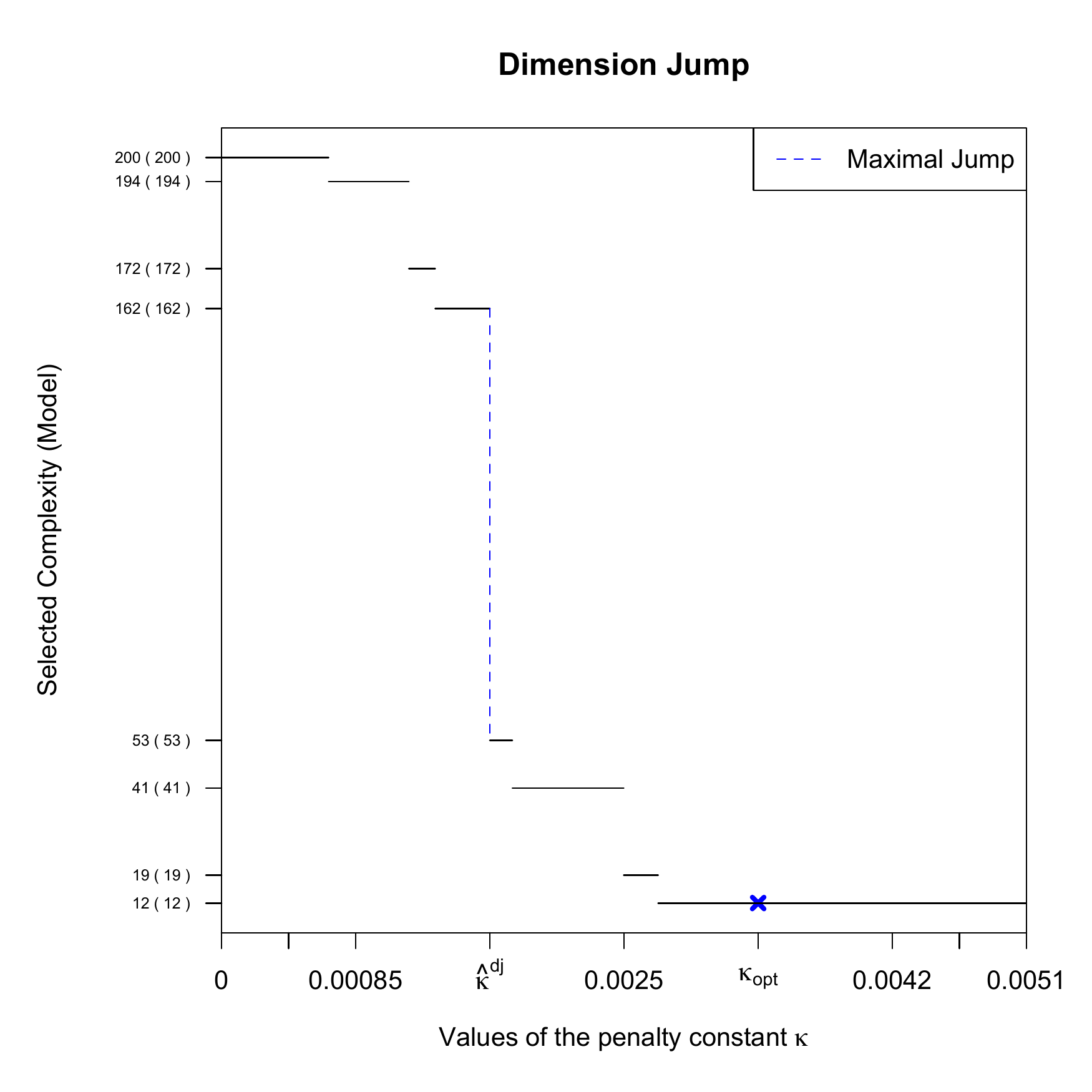}}      
 \caption{Illustration of the slope heuristics for the ARMA(2,1) process.}%
    \label{fig:arma_contrast}%
\end{figure}

Figure~\ref{fig:arma2_1_risk} shows the performance of the different methods. The boxplots on the left part of each graph show the risk of this model selection method over 100 trials. On the right, the risk function is displayed.

In this experiment, the classical dimension jump (penalty shape proportional to the dimension) works clearly well for $n$ large ($n \geq 2000$). It is however less efficient for $n$ small.  Indeed,  the risk shows a concave behavior in large dimensions, as in the long memory case (as we shall see later on). For small $n$, an estimation of $H$ with the Whittle estimator applied on the $Y$ process and plugged into the penalty shapes finally gives better results than the classical dimension jump method.

The Whittle estimator computed on the residuals is also efficient for selecting the minimal risk model for $n$ small. In this case we consider the residuals process of the model chosen at first step either by CDJ or by Wh(Y), the method CDJ + Wh(res) having bad results for $n$ too small ($n=200$).\\

\begin{figure}%
    \centering
    \subfloat[$n=200$]{{\includegraphics[width=0.49\textwidth]{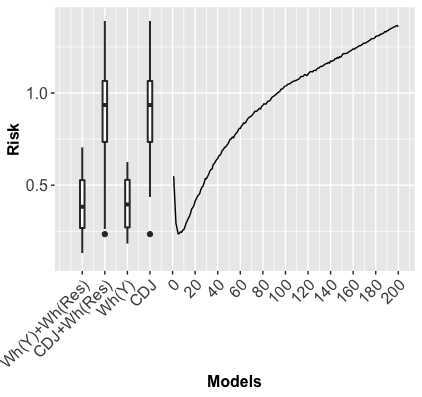} }}%
    \subfloat[$n=500$]{{\includegraphics[width=0.49\textwidth]{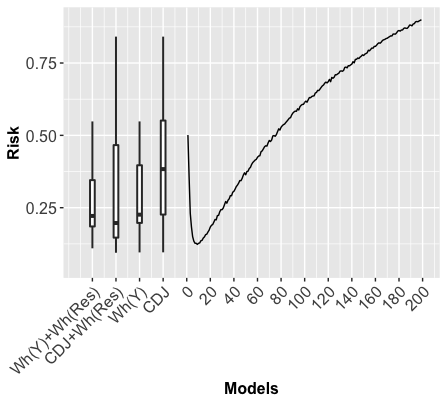} }} \\
    \subfloat[$n=2000$]{{\includegraphics[width=0.49\textwidth]{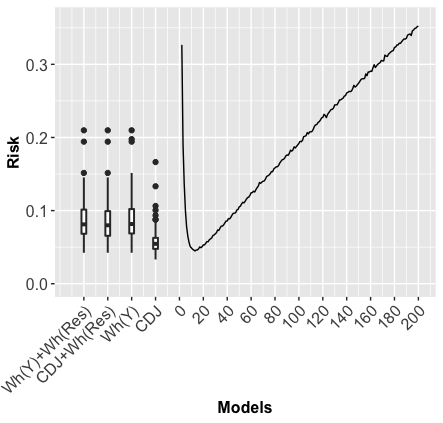} }}%
    \subfloat[$n=5000$]{{\includegraphics[width=0.49\textwidth]{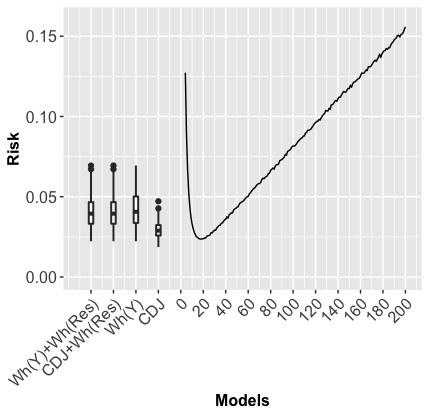} }}%
    \caption{Short Memory ARMA process. Risk curves and performances of the different calibration  methods for $n = 200, 500, 2000, 5000$.}%
    \label{fig:arma2_1_risk}%
\end{figure} 

$\bullet$ \textbf{Non Gaussian Markov chain}\\

To evaluate the robustness of the model selection procedure without the Gaussian error assumption, we consider  the Non Gaussian Markov chain defined above. We simulate an error process $\varepsilon$ distributed according to this stationary Markov chain, and 
we first make simulations in the short dependent case with a value of $a = 8$. As shown by Figure~\ref{fig:chain8_contrast}, a linear behavior of the empirical contrast can be observed, which is a good point for applying the slope heuristics here. 

The performances of the methods are summarized on Figure~\ref{fig:chain8}. We can check on this figure that the classical dimension jump shows good performances. 
For all  sample sizes, the dimension jump based on the Whittle estimator applied to $Y$ is a little less efficient than the two-step methods.  

 \begin{figure}
    \centering
    \subfloat[Linear behavior of the empirical contrast.]{
     { \includegraphics[scale=0.47]{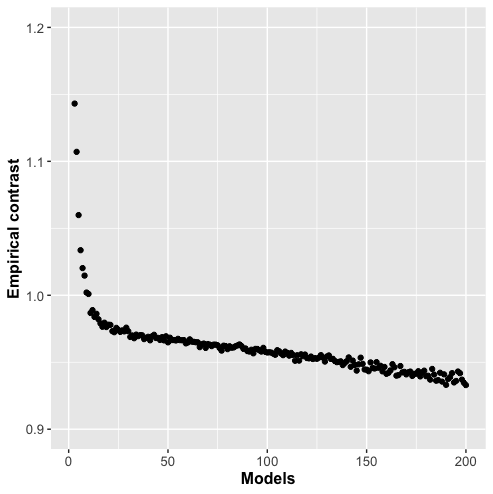}}}
    \subfloat[Dimension Jump.]{{\includegraphics[scale=0.47]{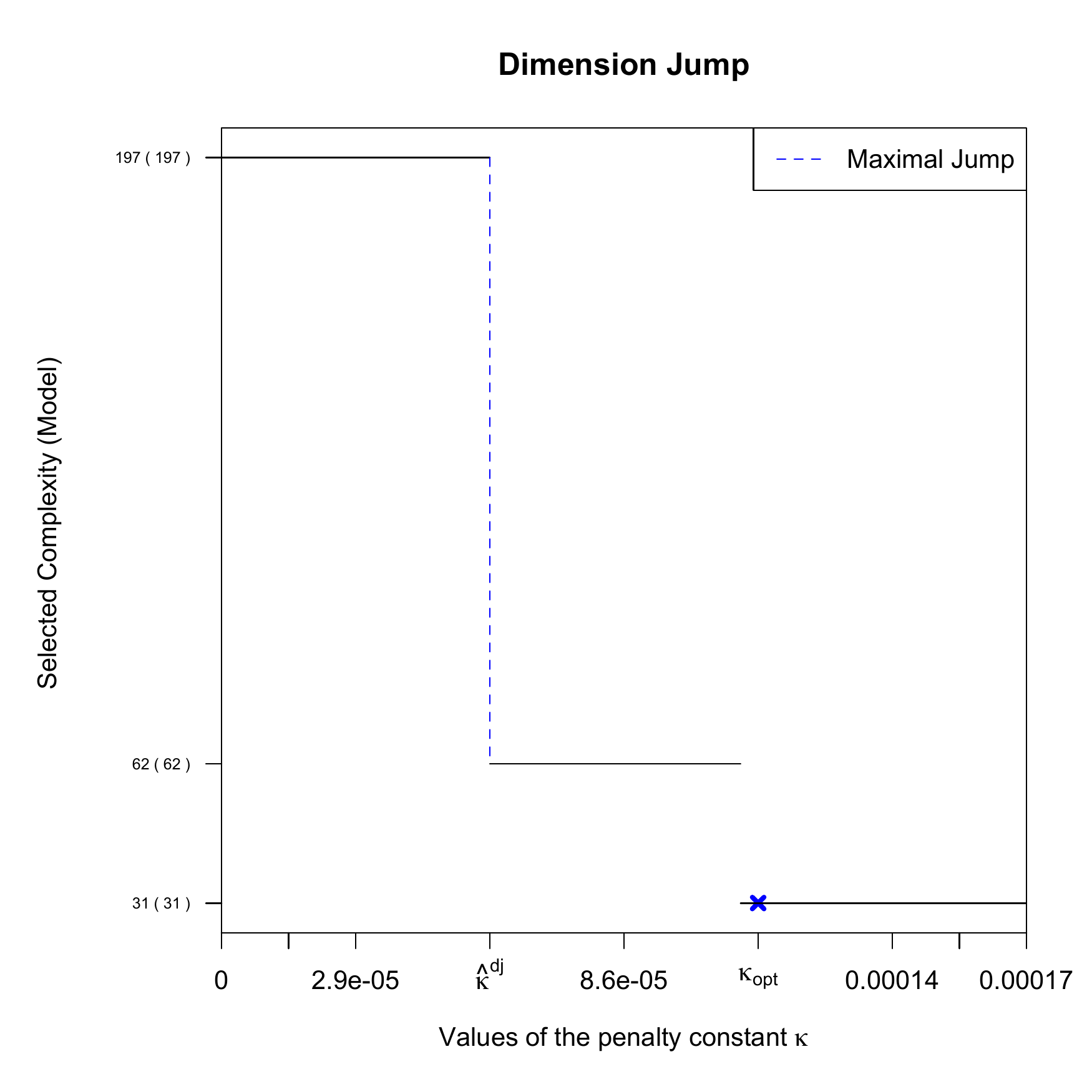}}}
 \caption{Illustration of the slope heuristics for the non Gaussian process ($a = 8$).}%
    \label{fig:chain8_contrast}%
\end{figure}

\begin{figure}
    \centering
    \subfloat[$n=200$]{{\includegraphics[width=0.49\textwidth]{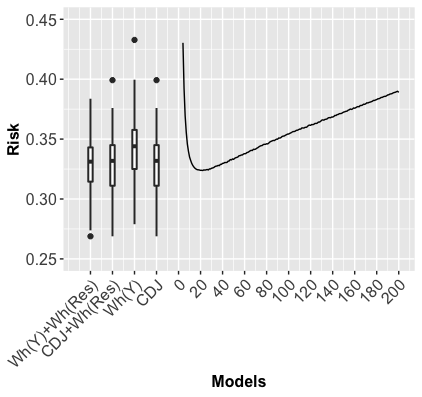} }}%
    \subfloat[$n=500$]{{\includegraphics[width=0.49\textwidth]{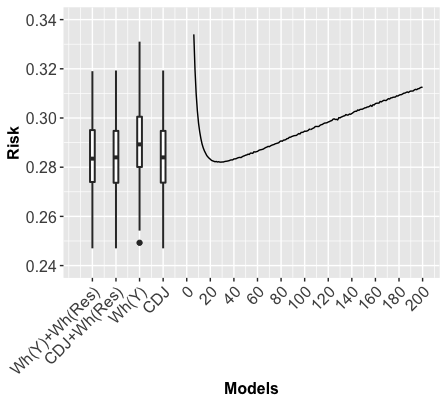} }} \\
    \subfloat[$n=2000$]{{\includegraphics[width=0.49\textwidth]{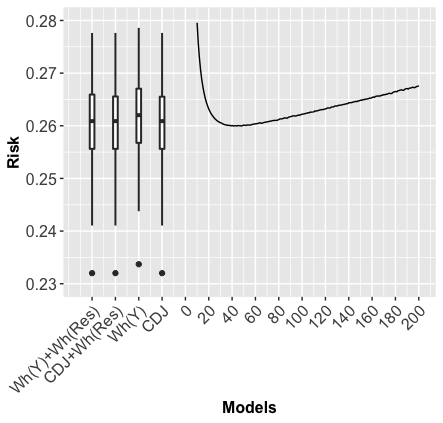} }}%
    \subfloat[$n=5000$]{{\includegraphics[width=0.49\textwidth]{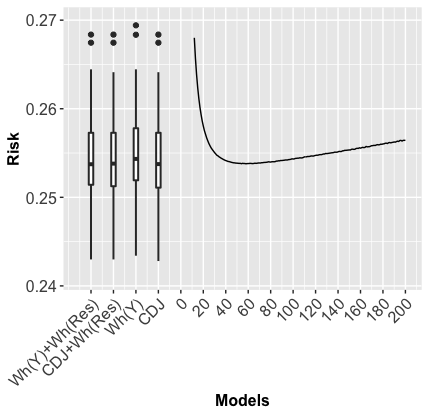} }}%
    \caption{Markov chain with $a=8$.  Risk curves and performances of the different calibration  methods for $n = 200, 500, 2000, 5000$.}%
    \label{fig:chain8}
\end{figure}
 
We now consider a second short memory case with the Markov chain,  with $a=1.5$. This case is very closed to the limit case $a=1$, which separates long memory from short memory. Figure~\ref{fig:chain1dot5} shows that the CDJ method works well for $n$ large. But for $n$ small, the four methods do not really manage to select a model close to the oracle model.

The methods based on the direct estimation of the Hurst exponent, like Wh(Y), give good results for $n$ smaller than $500$. 
Regarding the two-step methods, CDJ+Wh(res) shows bad performances for $n$ small ($n \leq 500$), while Wh(Y)+Wh(res) shows good results for $n=500$ but poor results for $n=200$.
 
\begin{figure}%
    \centering
    \subfloat[$n=200$]{{\includegraphics[width=0.49\textwidth]{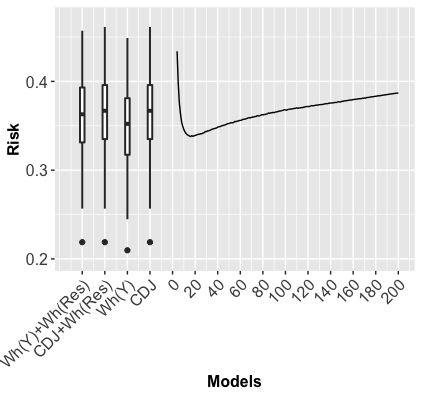} }}%
    \subfloat[$n=500$]{{\includegraphics[width=0.49\textwidth]{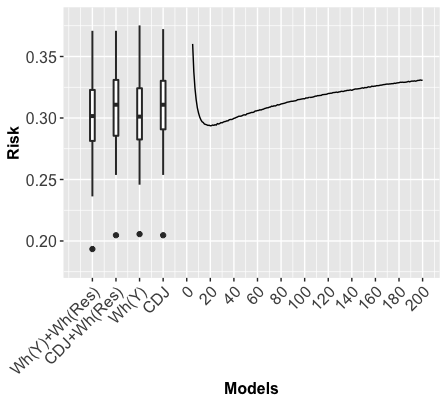} }} \\
    \subfloat[$n=2000$]{{\includegraphics[width=0.49\textwidth]{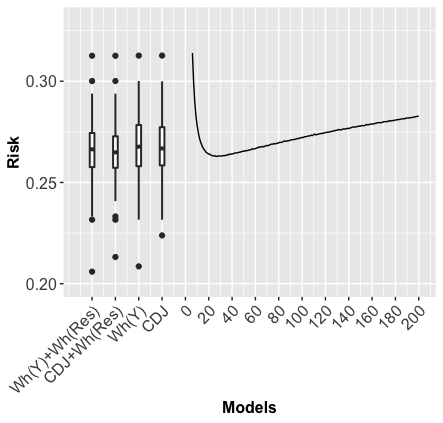} }}%
    \subfloat[$n=5000$]{{\includegraphics[width=0.49\textwidth]{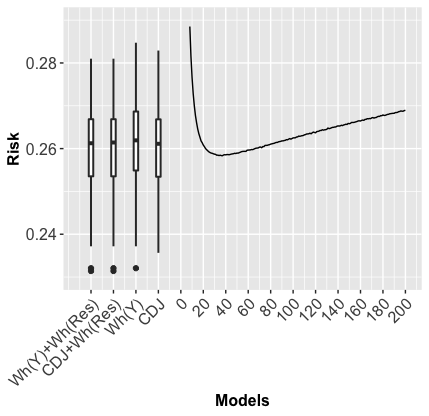} }}%
    \caption{Markov chain with $a=1.5$. Risk curves and performances of the different calibration  methods for $n = 200, 500, 2000, 5000$.}%
    \label{fig:chain1dot5}%
\end{figure} 
 
\subsection{Long range dependence}


For long range dependent Gaussian processes, the variance terms of the risk are not linear functions of the dimension, they behave as  $m^\gamma$ for some parameter $\gamma \in (0,1)$. We thus would like to use penalties proportional to $m^\gamma$, 
see Section~\ref{sub:case_long_range}. For instance, for Fractional Gaussian processes, $\gamma = 2-2H$, where $H$ is the Hurst exponent. Of course this coefficient is unknown in practice and thus we use some estimator of the Hurst exponent to calibrate the penalty. Generally speaking, estimating the Hurst exponent is a difficult statistical task, however a rough estimation can be sufficient for the model selection problem we study here.

\vskip 0.5cm

$\bullet$ \textbf{Fractional Gaussian Noise}\\
 
For this experiment we simulate the error process with a Gaussian Fractional Noise of Hurst parameter $H=0.7$.  The performances of the methods are summarized on Figure~\ref{fig:FGH0dot7}.  
We can check on this figure that when using a penalty with the true Hurst exponent ($H=0.7$) of the error process, the model selection method works correctly. We also note that the classical dimension jump (penalty shape proportional to the dimension) shows bad performances. 
On the other hand, the Whittle estimators applied to $Y$ and plugged into the penalty shape show good results for all samples size. The two steps methods show also good performances for $n$ large enough.\\

 \begin{figure}%
    \centering
    \subfloat[$n=200$]{{\includegraphics[width=0.49\textwidth]{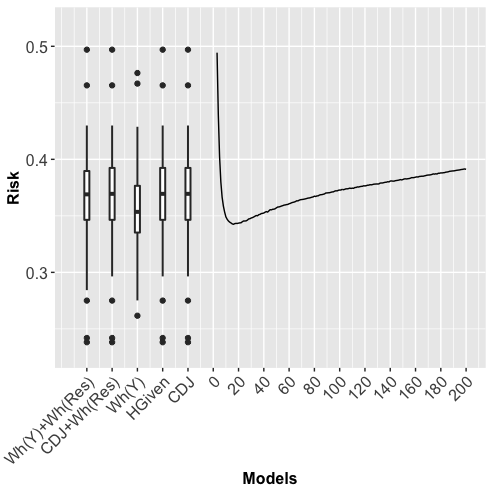} }}%
    \subfloat[$n=500$]{{\includegraphics[width=0.49\textwidth]{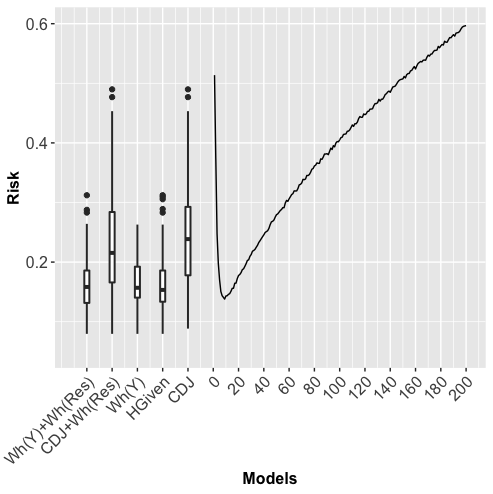} }} \\
    \subfloat[$n=2000$]{{\includegraphics[width=0.49\textwidth]{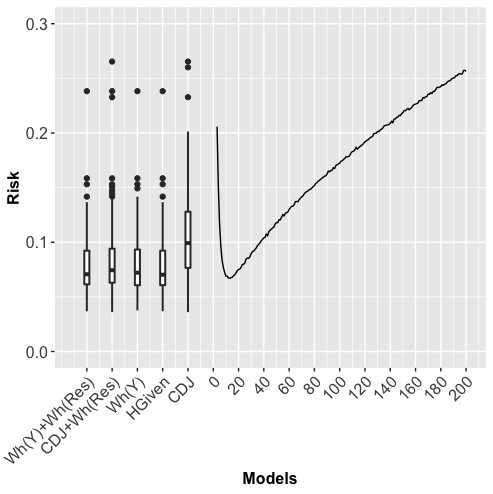} }}%
    \subfloat[$n=5000$]{{\includegraphics[width=0.49\textwidth]{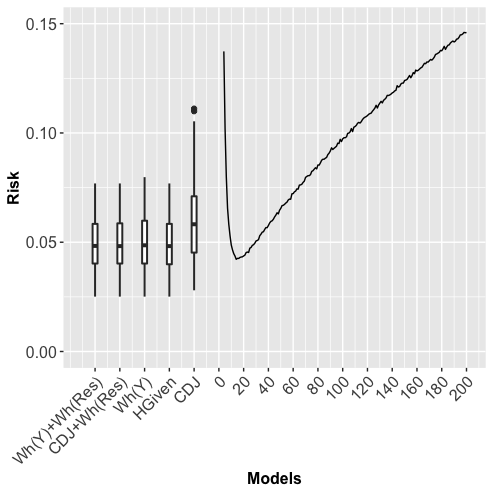} }}%
    \caption{Long Memory Fractional Gaussian error process with $H = 0.7$. Risk curves and performances of the different calibration methods for $n = 200, 500, 2000, 5000$.}%
    \label{fig:FGH0dot7}%
\end{figure}

\medskip 

$\bullet$ \textbf{Non Gaussian Markov chain}\\

We now evaluate the robustness of our model selection procedure when the Gaussian error assumption is not satisfied. We consider here the Non Gaussian Markov chain in the long range dependent setting.
As for the Fractional Gaussian Noise, the risk has a concave behavior for large dimension, see  Figure~\ref{fig:chain_compare_risk_shapes} for an illustration. Then the penalty shape is equal to $m^{a}$, where $a$ is the decay rate of the covariances.

For this experiment we simulate the Markov chain with $a = 0.5$ for the error process. The performances of the methods are displayed on Figure~\ref{fig:chaindot5}. 
We observe that the classical dimension jump shows bad performances in this non Gaussian long range dependent context. When using the penalty shape $m^a$ ($H$ given, with $a=2-2H)$, the performances are a little better than before, but not as good as one could hoped for. 
For $n$ large enough ($n \geq 2000$), the Whittle estimators applied on $Y$ and plugged into the penalty shape shows satisfactory results. The performances of the two step methods are similar but from $n=5000$.

This experiment suggests that more work should be done in this context. It seems that a concave penalty shape should be used, as expected, but that the good exponent could perhaps be different from $a=2-2H$. 
 
 \begin{figure}%
    \centering
    \subfloat[$n=200$]{{\includegraphics[width=0.49\textwidth]{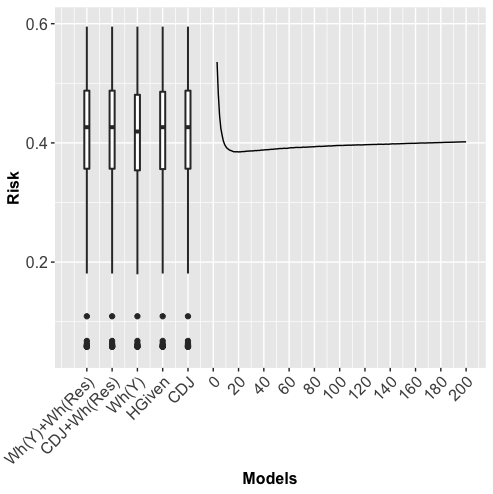} }}%
    \subfloat[$n=500$]{{\includegraphics[width=0.49\textwidth]{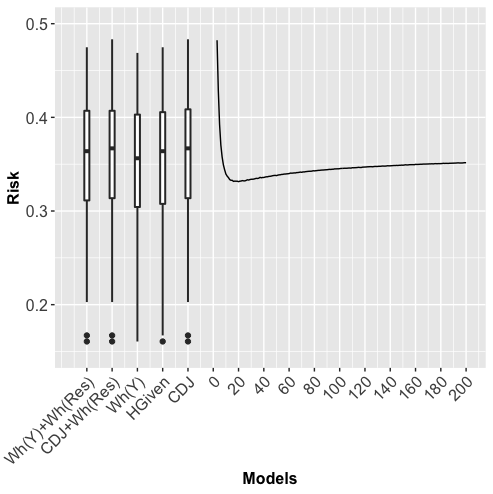} }} \\
    \subfloat[$n=2000$]{{\includegraphics[width=0.49\textwidth]{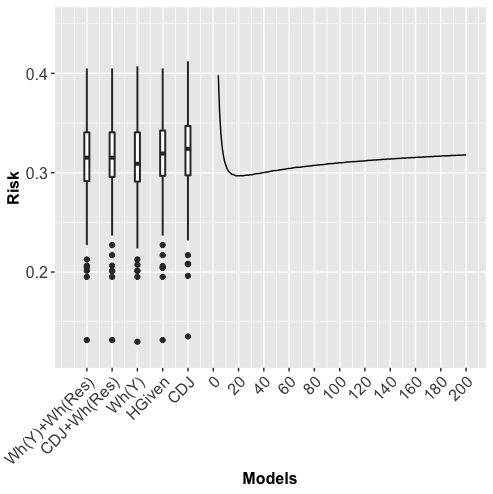} }}%
    \subfloat[$n=5000$]{{\includegraphics[width=0.49\textwidth]{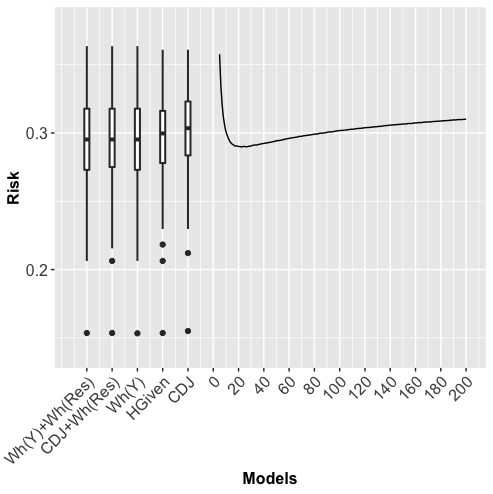} }}%
    \caption{Markov chain process with $a = 0.5$. Risk curves and performances of the different calibration  methods for $n = 200, 500, 2000, 5000$.}%
    \label{fig:chaindot5}%
\end{figure}

\subsection{Anti-persistent errors with a Fractional Gaussian Noise}

We consider the same simulation protocole with anti-persistent errors, following a Fractional Gaussian Noise with $H = 0.2$. Again, we observe a linear behavior of the empirical contrast in high dimension, see Figure~\ref{fig:FGH0dot2_contrast_linear}.
 \begin{figure}%
    \centering
    \subfloat[Linear behavior of the empirical contrast for $n=2000$.]{
    \label{fig:FGH0dot2_contrast_linear}
     \includegraphics[scale=0.47]{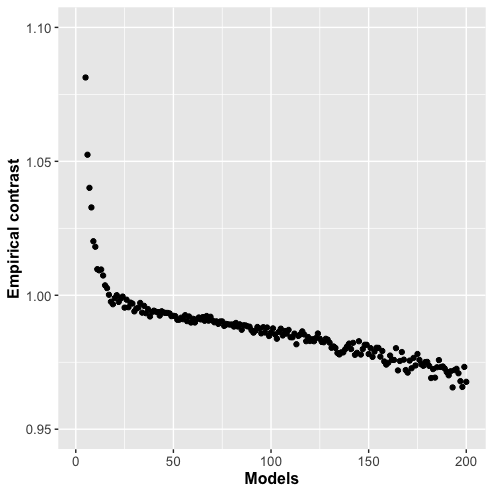}
     }
    \subfloat[Dimension Jump.]{{\includegraphics[scale=0.47]{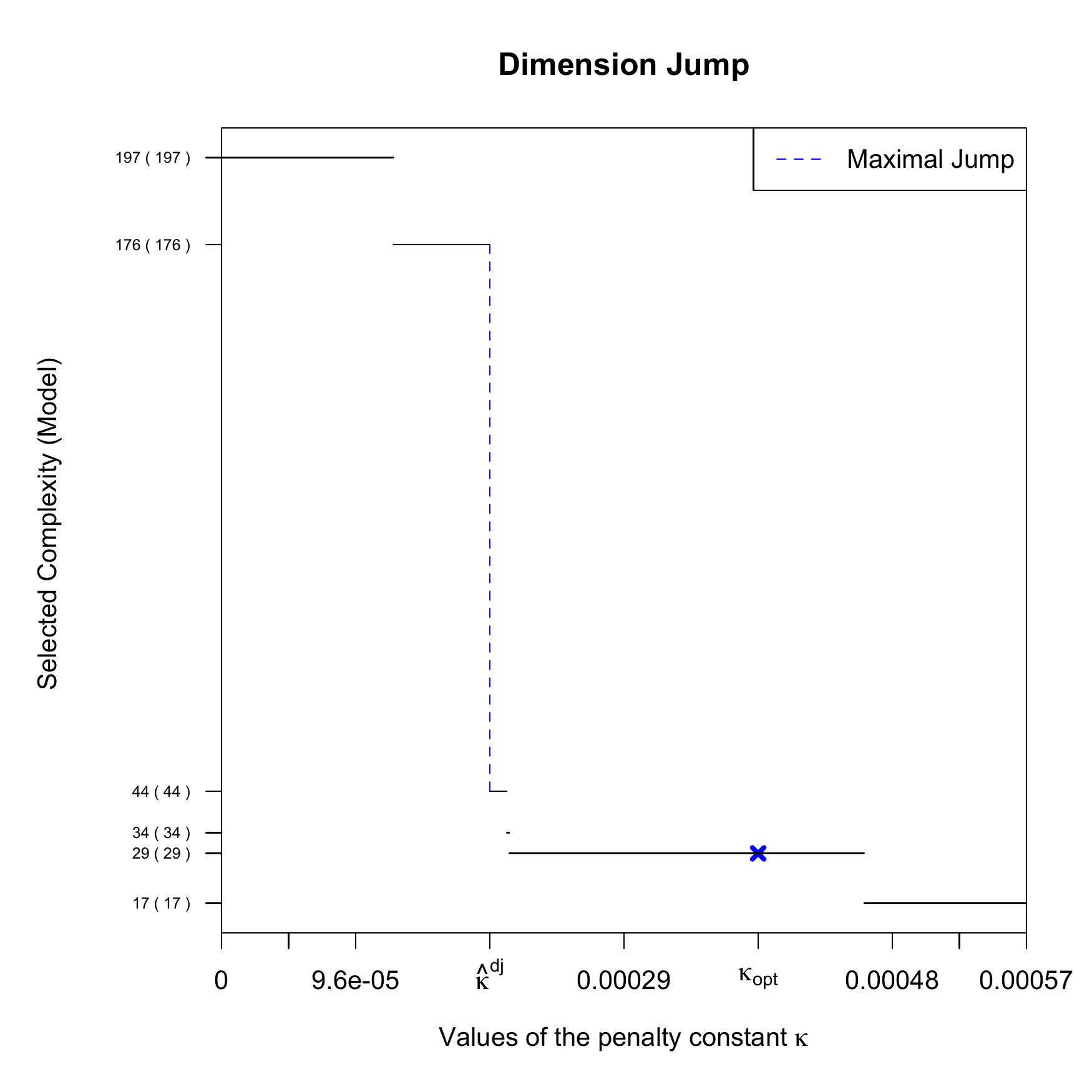}}}
 \caption{Illustration of the slope heuristics for the Fractional Gaussian process ($H = 0.2$).}%
    \label{fig:FGH0dot2_contrast}%
\end{figure}

The performances of the different methods on this experiment are summarized by Figure~\ref{fig:FGH0dot2}.
We can check that when using a penalty with the true Hurst exponent ($H=0.2$), the model selection method works pretty well. 
The two-step methods, with the Whittle estimator computed on the residuals, give similar results for all $n$. 
On the other hand, the Whittle estimator applied directly on $Y$ shows poor performances for $n$ small, but it is better for $n$ large. 

We also note that, in this short range dependent case, the classical dimension jump shows good results for all $n$, as in the i.i.d. case.

 \begin{figure}%
    \centering
    \subfloat[$n=200$]{{\includegraphics[width=0.49\textwidth]{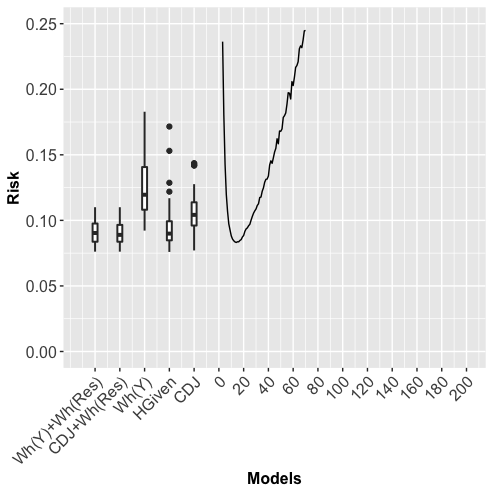} }}%
    \subfloat[$n=500$]{{\includegraphics[width=0.49\textwidth]{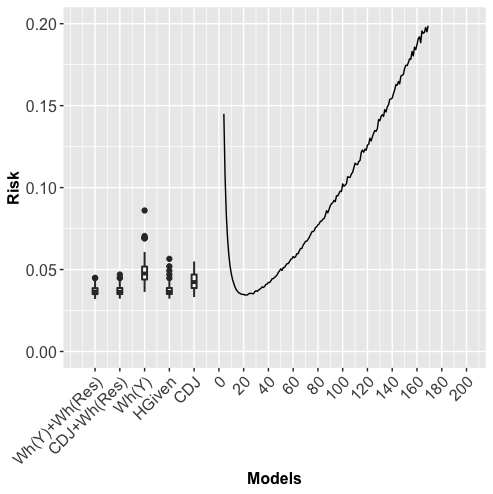} }} \\
    \subfloat[$n=2000$]{{\includegraphics[width=0.49\textwidth]{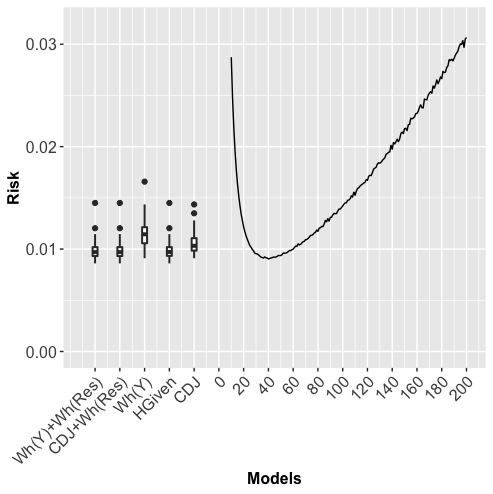} }}%
    \subfloat[$n=5000$]{{\includegraphics[width=0.49\textwidth]{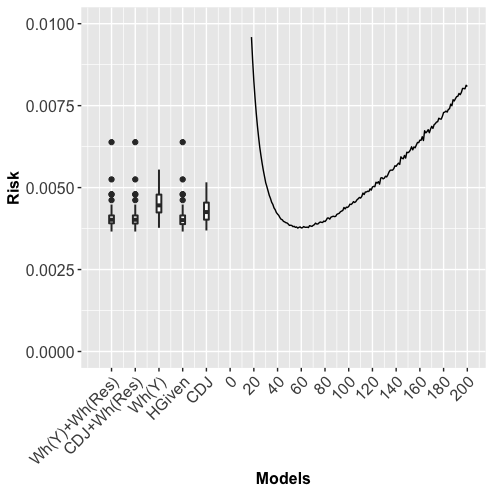} }}%
    \caption{Short Memory Fractional Gaussian process with $H = 0.2$. Risk curves and performances of the different calibration  methods for $n = 200, 500, 2000, 5000$.}%
    \label{fig:FGH0dot2}%
\end{figure}

\subsection{Conclusion on the experiments}


In these experiments we see that the penalty proportional to $(m/n)$ (with a constant calibrated thanks to the  jump dimension algorithm: CDJ method) performs quite well for short memory processes, but underperforms in all the other situations. The Wh(Y) method, with a penalty proportional to $(m/n)^{2-2\hat H}$ and an estimator $\hat H$ based ont he $Y_i$'s, performs quite well in most of the cases, but can show very bad performances (see for instance Figure~\ref{fig:FGH0dot2}) and is hard to justify from a heuristic point of vue.  The two steps methods, with a penalty proportional to $(m/n)^{2-2\hat H_2}$ and an estimator $\hat H_2$ based on the residuals of the first adjustment, performs well in most of the cases, with a clear preference for the Wh(Y)+Wh(Res) method. In fact, we suspect an overfitting with method CDJ for long memory processes, so that the residuals based on CDJ are not close to the original  error process (see the application to the Nile data in Section~\ref{sec::data_Nil}). 

We note that the two step method  Wh(Y)+Wh(Res) gives performances close, even sometimes better, to the best of the other proposed methods. An interesting example is the Gaussian ARMA process: for large $n$ ($n \geq 2000$), the risk curve is quasi linear, and the CDJ method is the best method. But for small $n$ ($n \leq 500$), the risk curve is concave, as in the long memory case, and the Wh(Y)+Wh(Res) is the best method.  This suggests that, even for short memory processes, a penalty proportional to $(m/n)$ is not always a wise choice in practice. 

Our final  comment is then: instead of looking for a penalty proportional to $(m/n)^\gamma$ for an appropriate $\gamma$, it might be preferable to estimate directly the term $\tr (\Proj_{S_{m}} \Sigma)$. This could perhaps be done by giving an estimation of the covariance $\Sigma$ based on the residuals of an appropriate pre-model. 

\section{Application to Nile data}
\label{sec::data_Nil}

In this section, we wish to continue the discussion on the Nile data initiated by Robinson in his 1997 article \cite{Rob1997Dep}. We borrow from Robinson his presentation of this dataset, as well as some other sentences: "These data consist of readings of annual minimum levels at the Roda gorge near Cairo, commencing in the year 622; often only the first
663 observations are employed because missing observations occur after the year 1284 (see~\cite{toussoun1925memoire}). It was one of the hydrological series examined by~\cite{hurst1951long} which led to his recognition of the "Hurst effect" and invention of the $R/S$ statistic". The data are plotted in Figure~\ref{fig:nileriverdata}. 

Robinson then summarizes the different ways of apprehending these data: either by considering that the cyclical variations come from a phenomenon of long memory, or by considering that the series can be written as the sum of a deterministic tendency plus a random noise. We refer to his article for relevant references on these questions.

Robinson  applied different kernel estimators (with different bandwidths) to estimate the regression function. Then he estimated the Hurst coefficient $H$ of  the errors from the residuals of the regression (see Section 4 of his paper for the definition of the estimator of $H$). He noted that "These estimates thus vary greatly over the ranges of the smoothing employed" and concluded this section by "This study highlights the need for developing methods for choosing $b$ and $c$ which respond automatically to the strength of the dependence in $u_t$" (here $b$ and $c$ are the bandwidth used  to estimate the regression function and the Hurst index respectively; $u_t$ is the error process, according to Robinson's notations).

This last sentence motivates us to apply our methods on these data, since we have a way to select automatically a partition from the data. We try two penalties: the usual penalty proportional to $m/n$, using the "classical jump dimension" to calibrate the constant (see CDJ method in Section~\ref{sec::simus}); this method should work well if the underlying error process was short range dependent. And a penalty proportional to $(m/n)^{2 -2\hat H_2}$, where $\hat H_2$  is the Hurst estimator based on the residuals, according to the Wh(Y)+Wh(Res) method described in Section~\ref{sec::simus}.  Indeed, this method was the best method according to  the different kind of simulations done in Section~\ref{sec::simus}. 
The resulting estimators are plotted in Figure~\ref{fig:nileriver_regression}. 

The CDJ method selects a partition of size $m=54$, with a clear impression of overfitting: the estimated trend seems very irregular, with many brutal changes. It seems that some randomness is still present in the trend. The Hurst index estimated through the residuals obtained with the estimated trend gives $\hat H=0.59$, hence not so far from a white noise.

The Wh(Y)+Wh(Res) selects a much smaller partition, with $m=7$. The trend looks more regular and interpretable, with a clear minimal period, a clear maximal period, and an almost constant tendency in between. It also suggests that an irregular partition should be used, which is a priori doable with our model-selection method, at the price of more tricky computations and algorithms. The Hurst index estimated through the residuals obtained with the estimated trend gives $\hat H=0.79$,  in accordance with the long-range dependence hypothesis. 

To be complete, the graph and the ACF of the residuals obtained with the Wh(Y) + Wh(Res) method are plotted in Figure~\ref{fig:nileriver_resacf_method_why_whres}.

\begin{figure}%
    \centering
	\includegraphics[width=0.49\textwidth]{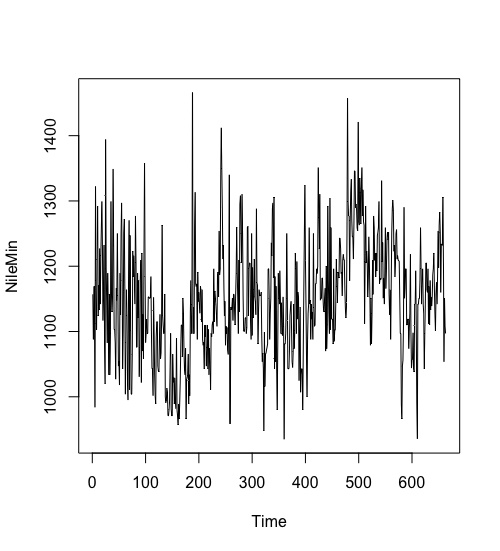}
	\caption{Nile River data.}%
    \label{fig:nileriverdata}%
\end{figure}

\begin{figure}%
    \centering
    \subfloat[Regressogram with CDJ]{{\includegraphics[width=0.49\textwidth]{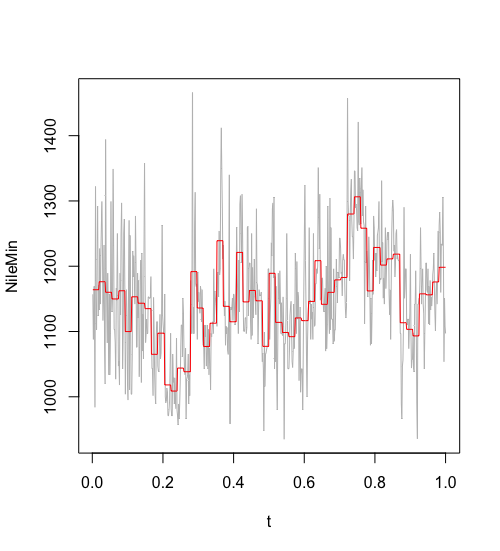} }}%
    \subfloat[Regressogram with Wh(Y)+Wh(res)]{{\includegraphics[width=0.49\textwidth]{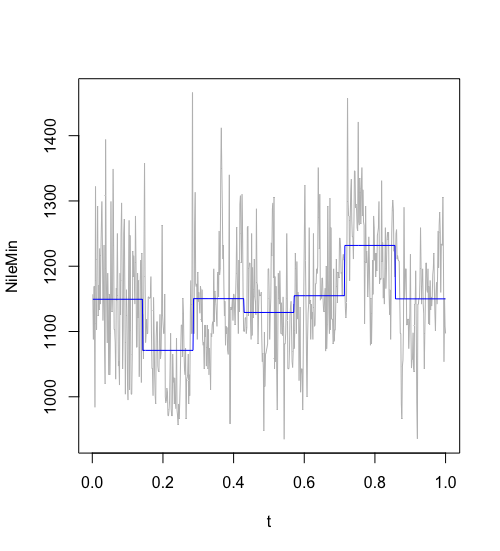} }} \\
    \caption{Nile River data and resulting estimators.}%
    \label{fig:nileriver_regression}%
\end{figure}

\begin{figure}%
    \centering
    \subfloat[Residuals]{{\includegraphics[width=0.49\textwidth]{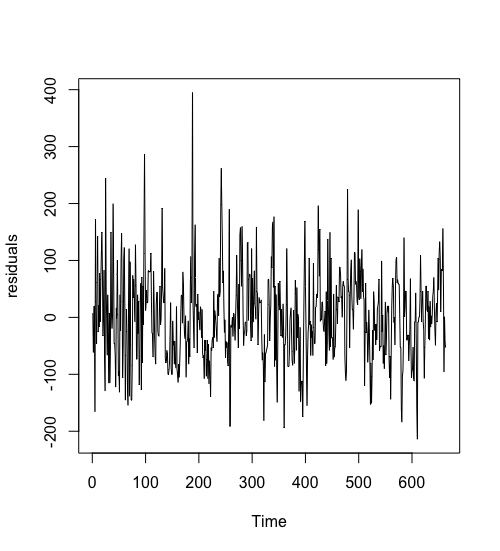} }}%
    \subfloat[ACF]{{\includegraphics[width=0.49\textwidth]{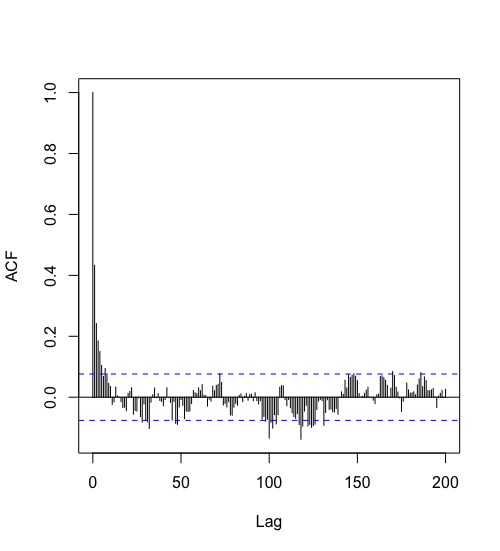} }} \\
    \caption{Residuals and ACF of the residuals for the method Wh(Y)+Wh(res).}%
    \label{fig:nileriver_resacf_method_why_whres}%
\end{figure}




\section{Discussion}
\label{sec::discuss}

This paper deals with linear model selection  with Gaussian dependent errors through  $\ell_0$ penalization. Several generalizations and extensions could be proposed in future works.

In this paper, we apply Theorem~\ref{theo:main_theorem} to study the fixed design case, but clearly the theorem  also  applies to all the settings considered in \cite{birge2001gaussian} (or Chapter~2 in~\cite{giraud2014introduction}) in the i.i.d case. In particular, if the error process is short range dependent, then for all these problems the penalty is the same as the i.i.d. case, the usual variance term being replaced by the spectral radius of the covariance matrix.

The performances of the $\ell_0$ penalization strategy are studied in this work  assuming that the distribution of the errors is stationary. However, Theorem~\ref{theo:main_theorem} does not require this assumption. 
In a similar line of work, \cite{gendre2008simultaneous} considers model selection for heteroscedastic Gaussian regression, for independent observations. It would be  possible to  study model selection for heteroscedastic Gaussian linear models with dependence and in particular in the long memory setting.  

An other line of research concerns  an extension of Theorem~\ref{theo:main_theorem} for non linear models. Indeed, in the independent setting, a general model selection for non linear models is given  in~\cite{massart2007concentration} (Theorem 4.18). By combining a Gaussian concentration inequality together with a chaining argument for dependent variables, we believe that it is possible to generalize the $\ell_0$ penalization strategy  for non linear models.
 
Our work strongly relies on the Gaussian assumption. It would be also interesting to provide model selection results for non Gaussian noise. Note that \cite{gendre2014model} gives a general model selection theorem for linear models, under moment conditions. It would be interesting to revisit these results in the context of long range dependence. 
 
As illustrated in the last sections,  it appears to be possible to adapt the slope heuristics for calibrating penalties in the context of regression with dependent errors. It would be more satisfactory to provide justification of the slope heuristics in this context. A first step would be to justify the slope heuristics for regression with short memory errors. 
Finally, note that model selection for density estimation under mixing conditions with resampling penalties has been studied in \cite{lerasle2011optimal}. This strategy is computationally expensive but it  deserves to be investigated for regression under short and long memory errors.

\subsection*{Acknowledgment}

The authors are grateful to Anne Philippe for helpful discussions and suggestions about statistics of long memory processes.

\section{Proofs}
\label{sec::proofs}

\subsection{Proof of Theorem~\ref{theo:main_theorem}}

We adapt the proof of Theorem~$2.2$ in~\cite{giraud2014introduction} in the framework of dependent Gaussian errors. Starting from the definition of $\hat{m}$, see Equation~\eqref{crit}, we find that for all $m \in \mathcal{M}$
\[\left \| Y - \hat{t}_{\hat m} \right \|_n^{2} + \pen(\hat{m}) \leq \left \| Y - \hat{t}_{m} \right \|_n^{2} + \pen(m).\]
Next,
\[\left \| \varepsilon + (t^{\ast} - \hat{t}_{\hat m}) \right \|_n^{2} + \pen(\hat{m}) \leq \left \| \varepsilon + (t^{\ast} - \hat{t}_{m}) \right \|_n^{2} + \pen(m),\]
and thus
\[\left \| \varepsilon \right \|_n^{2} + \left \| t^{\ast} - \hat{t}_{\hat m} \right \|_n^{2} + 2 \langle \varepsilon,t^{\ast} - \hat{t}_{\hat m} \rangle_n + \pen(\hat{m}) \leq \left \| \varepsilon \right \|_n^{2} + \left \| t^{\ast} - \hat{t}_{m} \right \|_n^{2} + 2 \langle \varepsilon,t^{\ast} - \hat{t}_{m} \rangle_n + \pen(m),\]
where $\langle \cdot, \cdot \rangle_n$ is the normalized inner product in $\R^n$: $\langle \cdot, \cdot \rangle_n =  \frac 1n\langle \cdot, \cdot \rangle $.
It can be checked that $\E \left[ \langle \varepsilon,t^{\ast} - \hat{t}_{m} \rangle_n \right] \leq 0 $ and finally we obtain that
\[
 \E  \left\| t^{\ast} - \hat{t}_{\hat m} \right \|_n^{2} \leq  \E  \left \| t^{\ast} - \hat{t}_{m} \right \|_n^{2}  + \pen(m) + 2  \E  \left( \langle \varepsilon,\hat{t}_{\hat m} - t^{\ast} \rangle_n   - \pen(\hat{m}) \right).
\]
The theorem can be directly derived from the next result

\begin{prop} \label{prop:pen_Z}
For the penalty defined by Equation~\eqref{eq:pen0}, there exists  some constants $a > 1$ and $L_{K} \geq 0$  that only depend on $K$, and a random variable $Z$ satisfying $\mathbb{E}(Z) \leq L_{K} \frac {\rho(\Sigma)}{n}$, such that
\[2 \langle \varepsilon,\hat{t}_{\hat m} - t^{\ast} \rangle_n - \pen(\hat{m}) \leq a^{-1} \left \| \hat{t}_{\hat m} - t^{\ast} \right \|_n^{2} + Z.\]
\end{prop}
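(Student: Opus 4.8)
The plan is to let the proposition's own inequality dictate the definition of $Z$: set
\[
Z = \sup_{m\in\mathcal M}\Bigl(2\langle\varepsilon,\hat t_m-t^{\ast}\rangle_n - a^{-1}\bigl\|\hat t_m-t^{\ast}\bigr\|_n^2 - \pen(m)\Bigr)_{+},
\]
so that the claimed bound holds for the (data-dependent) index $\hat m$ by construction. Everything then reduces to showing $\E(Z)\le L_K\,\rho(\Sigma)/n$, which I would obtain from $\E(Z)=\int_0^\infty \P(Z>t)\,dt$ together with the union bound $\P(Z>t)\le\sum_{m\in\mathcal M}\P\bigl(2\langle\varepsilon,\hat t_m-t^{\ast}\rangle_n - a^{-1}\|\hat t_m-t^{\ast}\|_n^2-\pen(m)>t\bigr)$. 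Thus it suffices to produce, for each fixed $m$, a sub-Gaussian tail bound of the form $\pi_m\exp(-c\,nt/\rho(\Sigma))$.

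The key per-model device is to linearize over an enlarged subspace. Writing $b_m=(\Id-\Proj_{S_m})t^{\ast}$, one has $\hat t_m-t^{\ast}\in V_m:=S_m+\R b_m$, so by Cauchy--Schwarz restricted to $V_m$, $\langle\varepsilon,\hat t_m-t^{\ast}\rangle_n=\langle\Proj_{V_m}\varepsilon,\hat t_m-t^{\ast}\rangle_n\le \|\hat t_m-t^{\ast}\|_n\,\|\Proj_{V_m}\varepsilon\|_n$. A weighted Young inequality then collapses the two random pieces into a single quadratic term:
\[
2\langle\varepsilon,\hat t_m-t^{\ast}\rangle_n - a^{-1}\bigl\|\hat t_m-t^{\ast}\bigr\|_n^2 \;\le\; a\,\bigl\|\Proj_{V_m}\varepsilon\bigr\|_n^2 .
\]
Here $V_m$ adds at most one dimension to $S_m$, whence $\tr(\Proj_{V_m}\Sigma)\le \tr(\Proj_{S_m}\Sigma)+\rho(\Sigma)$ --- this is precisely the quantity appearing under the first square root in \eqref{eq:pen0}, which explains the extra $\rho(\Sigma)$ there.

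It remains to control the Gaussian quadratic form $\|\Proj_{V_m}\varepsilon\|_n$. Writing $\varepsilon=\Sigma^{1/2}\xi$ with $\xi$ standard Gaussian, the map $\xi\mapsto\|\Proj_{V_m}\Sigma^{1/2}\xi\|$ is $\sqrt{\rho(\Sigma)}$-Lipschitz, and $\E\|\Proj_{V_m}\varepsilon\|\le\sqrt{\tr(\Proj_{V_m}\Sigma)}\le\sqrt{\tr(\Proj_{S_m}\Sigma)+\rho(\Sigma)}$. The Gaussian concentration inequality for Lipschitz functionals gives, with probability at least $1-e^{-u}$, $\sqrt{n}\,\|\Proj_{V_m}\varepsilon\|_n\le \sqrt{\tr(\Proj_{S_m}\Sigma)+\rho(\Sigma)}+\sqrt{2\rho(\Sigma)u}$. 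Choosing $u=\log(1/\pi_m)+s$ and using subadditivity of the square root, the right-hand side is bounded by $P_m+\sqrt{2\rho(\Sigma)s}$, where $P_m:=\sqrt{\tr(\Proj_{S_m}\Sigma)+\rho(\Sigma)}+\sqrt{\rho(\Sigma)}\sqrt{2\log(1/\pi_m)}$ satisfies $\pen(m)=\tfrac{K}{n}P_m^2$. Hence $a\|\Proj_{V_m}\varepsilon\|_n^2\le \tfrac{a(1+\eta)}{n}P_m^2+\tfrac{2a(1+\eta^{-1})}{n}\rho(\Sigma)s$ for any $\eta>0$; picking $a>1$ and $\eta>0$ with $a(1+\eta)\le K$ absorbs the first summand into $\pen(m)$, yielding $\P(\,\cdots>t\,)\le\pi_m\exp\!\bigl(-nt/(2a(1+\eta^{-1})\rho(\Sigma))\bigr)$. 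Summing over $m$ (using $\sum_m\pi_m=1$) and integrating in $t$ delivers $\E(Z)\le L_K\rho(\Sigma)/n$ with $L_K=2a(1+\eta^{-1})$.

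The main obstacle is the constant: the requirement $K>1$ is tight. The naive route, expanding $\langle\varepsilon,\hat t_m-t^{\ast}\rangle_n=\|\Proj_{S_m}\varepsilon\|_n^2-\langle\varepsilon,b_m\rangle_n$ and handling the quadratic and linear parts separately, forces a factor $2$ in front of $\|\Proj_{S_m}\varepsilon\|_n^2$ and would only yield $K>2$. The linearization over $V_m$ above is exactly what avoids this loss: it both manufactures the sharp trace term $\tr(\Proj_{S_m}\Sigma)+\rho(\Sigma)$ and keeps the leading constant at $a\approx1$, so that the condition $a(1+\eta)\le K$ is solvable for every $K>1$ (with $L_K\to\infty$ as $K\downarrow1$).
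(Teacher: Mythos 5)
Your proof is correct and follows essentially the same route as the paper: the same enlargement of $S_m$ by the bias direction (your $V_m$ is the paper's $\bar S_m$), the same Young-inequality linearization giving $a\|\Proj_{V_m}\varepsilon\|_n^2$, the same bound $\tr(\Proj_{V_m}\Sigma)\le\tr(\Proj_{S_m}\Sigma)+\rho(\Sigma)$, and the same Gaussian concentration for the $\sqrt{\rho(\Sigma)}$-Lipschitz projection norm with the $\log(1/\pi_m)$ peeling and union bound. Your tail-integration of $\P(Z>t)$ is just a rephrasing of the paper's identity $\E[(\xi_m-\log(1/\pi_m))_+]=\pi_m$, and with $a=(K+1)/2$ your constant $L_K=2a(1+\eta^{-1})$ even reproduces the paper's $L_K=(2K^2+2K)/(K-1)$.
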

According to the proposition, we find that
\[\mathbb{E} \left[ \left \| t^{\ast} - \hat{t}_{\hat m} \right \|_n^{2} \right] \leq \mathbb{E} \left[ \left \| t^{\ast} - \hat{t}_{m} \right \|_n^{2} \right] + \pen(m) + a^{-1} \mathbb{E} \left[ \left \| \hat{t}_{\hat m} - t^{\ast} \right \|_n^{2} \right] + \mathbb{E}(Z)\]
and
\[\frac{a-1}{a} \mathbb{E} \left[ \left \| t^{\ast} - \hat{t}_{\hat m} \right \|_n^{2} \right] \leq \mathbb{E} \left[ \left \| t^{\ast} - \hat{t}_{m} \right \|_n^{2} \right] + \pen(m) + L_{K} \frac {\rho(\Sigma)}{n}.\]
Thus, 
\[\mathbb{E} \left[ \left \| t^{\ast} - \hat{t}_{\hat m} \right \|_n^{2} \right] \leq C_{K} \left( \mathbb{E} \left[ \left \| t^{\ast} - \hat{t}_{m} \right \|_n^{2} \right] + \frac {\rho(\Sigma)}{n}+ \pen(m) \right),\]
where $C_{K} = \max \left( \frac{a}{a-1},  \frac{a L_{K}}{a-1} \right)$ and the proof of Theorem~\ref{theo:main_theorem} is complete. 

\subsection{Proof of Proposition~\ref{prop:pen_Z}}
 
We first recall a well known inequality from Cirel'son, Ibragimov et Sudakov~\cite{cirel1976norms}.
\begin{theo} \label{Ineg:Ibragimov}
Let $F : (\mathbb{R}^{n}, \| \cdot  \|) \rightarrow \mathbb{R}$  be a $1$-Lipschitz function and $\eta$ a random vector in $\R^n$ such that  $\eta \sim \mathcal{N}_n(0,\sigma^{2} Id)$ for some $\sigma >0$. Then there exists a random variable $\xi$ following an exponential distribution of parameter $1$ such that
\[F(\eta) \leq \mathbb{E} \left[ F(\eta) \right] + \sigma \sqrt{2 \xi}.\]
\end{theo}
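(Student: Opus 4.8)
The plan is to establish the sharp Gaussian concentration inequality in tail form and then repackage it as the almost-sure bound with an exponential variable. First I would reduce to the standard Gaussian and to smooth functions. Writing $\eta = \sigma Z$ with $Z \sim \mathcal N_n(0,\Id)$, the map $G(z) = \sigma^{-1} F(\sigma z)$ is again $1$-Lipschitz and $F(\eta) = \sigma\, G(Z)$, so it is enough to prove that for every $1$-Lipschitz $G$ and every $s \geq 0$,
\[
\P\bigl( G(Z) - \E[G(Z)] \geq s \bigr) \leq e^{-s^2/2}.
\]
A routine mollification (convolving $G$ with a narrow Gaussian kernel) then lets me assume that $G$ is smooth with $\|\nabla G\| \leq 1$ everywhere, the mean and the tail probabilities being stable under this limit.

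The core of the argument is the sub-Gaussian control of the Laplace transform,
\[
\E\bigl[ e^{\lambda (G(Z) - \E[G(Z)])} \bigr] \leq e^{\lambda^2/2}, \qquad \lambda \in \R,
\]
which I would obtain from the Gaussian logarithmic Sobolev inequality via the Herbst argument. Applying the log-Sobolev inequality to $f = e^{\lambda G/2}$ and using $\|\nabla G\| \leq 1$ bounds the entropy of $e^{\lambda G}$ by $\tfrac12 \lambda^2\, \E[e^{\lambda G}]$; writing $\phi(\lambda) = \log \E[e^{\lambda G}]$, this rewrites as the differential inequality $\frac{d}{d\lambda}\bigl( \phi(\lambda)/\lambda \bigr) \leq \tfrac12$. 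Since $\phi(\lambda)/\lambda \to \E[G]$ as $\lambda \to 0^+$, integrating gives $\phi(\lambda) \leq \lambda\,\E[G] + \tfrac12\lambda^2$, which is the claimed bound (the case $\lambda < 0$ follows by applying the same to $-G$). A Chernoff optimization $\P(G - \E[G] \geq s) \leq \inf_{\lambda > 0} e^{\lambda^2/2 - \lambda s} = e^{-s^2/2}$ then yields the tail bound, hence after rescaling $\P(F(\eta) - \E[F(\eta)] \geq t) \leq e^{-t^2/(2\sigma^2)}$.

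Finally I would convert the tail bound into the stated form. Set $V = (F(\eta) - \E[F(\eta)])_+^2 / (2\sigma^2)$; the tail bound gives $\P(V \geq s) \leq e^{-s}$, i.e.\ $V$ is stochastically dominated by an exponential variable of parameter $1$. By the standard monotone (quantile) coupling, on a possibly enlarged probability space there is $\xi \sim \mathrm{Exp}(1)$ with $V \leq \xi$ almost surely, and then $F(\eta) - \E[F(\eta)] \leq \sqrt{2\sigma^2 V} \leq \sigma\sqrt{2\xi}$, as required. The main obstacle is the \emph{sharp} constant in the Laplace bound: an elementary interpolation (Maurey--Pisier) argument gives only $e^{\pi^2\lambda^2/8}$, and recovering the optimal $e^{\lambda^2/2}$---needed for the exact factor $\sigma\sqrt{2\xi}$---is precisely what forces the use of the logarithmic Sobolev inequality (or equivalently the Ornstein--Uhlenbeck semigroup), whose proof is the genuinely analytic part of the argument.
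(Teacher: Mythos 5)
Your argument is correct, but there is nothing in the paper to compare it against: Theorem~\ref{Ineg:Ibragimov} is recalled as a classical result of Cirel'son, Ibragimov and Sudakov and is used as a black box, with no proof supplied. What you have written is a complete and standard modern derivation: the reduction $\eta=\sigma Z$, the mollification, the Herbst argument (log-Sobolev applied to $e^{\lambda G/2}$ giving $\lambda\phi'(\lambda)-\phi(\lambda)\le\lambda^2/2$, hence $\phi(\lambda)\le\lambda\E[G]+\lambda^2/2$), the Chernoff bound $\P(F(\eta)-\E[F(\eta)]\ge t)\le e^{-t^2/(2\sigma^2)}$, and finally the repackaging via $V=(F(\eta)-\E[F(\eta)])_+^2/(2\sigma^2)$ and a monotone coupling with an exponential variable --- all steps check out, including the sharp constant, which is indeed what the application in Lemma~\ref{chap4:concentration} requires. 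Two small remarks. First, the original proof of Cirel'son--Ibragimov--Sudakov proceeds by stochastic calculus (an Itô/martingale representation along a Brownian or Ornstein--Uhlenbeck interpolation) rather than through the logarithmic Sobolev inequality; your route via Herbst is the now-standard alternative and is equally legitimate, at the cost of taking the Gaussian log-Sobolev inequality as given. Second, the coupling step deserves the care you give it: the theorem asserts the existence of $\xi\sim\mathrm{Exp}(1)$ with $V\le\xi$ almost surely, and passing from the tail domination $\P(V\ge s)\le e^{-s}$ to such a pointwise coupling does in general require the distributional (randomized quantile) transform on an enlarged space when $V$ has atoms; since the theorem is only ever invoked in the paper through the marginal law of $\xi_m$ (to compute $\E[(\xi_m-\log(1/\pi_m))_+]=\pi_m$), this enlargement is harmless.
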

Note that the Lipschitz condition is expressed with respect to  the (non-normalized) euclidean norm  $\| \cdot  \|$ in $\R^n$. We derive the following lemma for the projection of Gaussian random vectors. 
\begin{lem}
Let $\Sigma$ be a $n\times n$ symmetric semidefinite matrix and $S$ a linear subspace of $\R^n$. Let $\varepsilon$ be a Gaussian random vector such that $\varepsilon \sim \mathcal{N}_n(0, \Sigma)$. Then there exists a random variable $\xi$ following an exponential distribution of parameter $1$ such that
\[\left \| \Proj_{S} (\varepsilon) \right \|_n \leq \mathbb{E} \left \| \Proj_{S} (\varepsilon) \right \|_n + \sqrt{ \frac {\rho(\Sigma)}{n}} \sqrt{2 \xi}.\] 
\label{chap4:concentration}
\end{lem}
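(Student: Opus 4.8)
The plan is to reduce the anisotropic vector $\varepsilon$ to the isotropic setting covered by Theorem~\ref{Ineg:Ibragimov}, via the symmetric square root of the covariance. Since $\Sigma$ is symmetric and semidefinite it admits a symmetric square root $\Sigma^{1/2}$, and if $\eta \sim \mathcal{N}_n(0,\Id)$ then $\Sigma^{1/2}\eta \sim \mathcal{N}_n(0,\Sigma)$. Because the conclusion is purely distributional (it only asserts the existence of an exponential variable $\xi$), I may realize $\varepsilon$ as $\Sigma^{1/2}\eta$ and apply the concentration inequality to the standard Gaussian vector $\eta$. (The degenerate case $\rho(\Sigma)=0$ forces $\varepsilon=0$ almost surely, and the inequality is then trivial, so I assume $\rho(\Sigma)>0$.)

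The core step is to introduce
\[
F(x) = \left\| \Proj_S\left( \Sigma^{1/2} x \right) \right\|_n, \qquad x \in \R^n,
\]
and to compute its Lipschitz constant with respect to the non-normalized euclidean norm $\|\cdot\|$ used in Theorem~\ref{Ineg:Ibragimov}. By linearity of $\Proj_S \Sigma^{1/2}$, the reverse triangle inequality, and the relation $\|\cdot\|_n = n^{-1/2}\|\cdot\|$,
\[
|F(x) - F(y)| \leq \frac{1}{\sqrt n} \left\| \Proj_S \Sigma^{1/2}(x-y) \right\| \leq \frac{1}{\sqrt n} \left\| \Sigma^{1/2}(x-y) \right\|,
\]
the last bound using that the orthogonal projection $\Proj_S$ is a contraction for $\|\cdot\|$. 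Since $\Sigma^{1/2}$ is symmetric with largest eigenvalue $\sqrt{\lambda_1}=\sqrt{\rho(\Sigma)}$, its operator norm equals $\sqrt{\rho(\Sigma)}$, so $\|\Sigma^{1/2}(x-y)\| \leq \sqrt{\rho(\Sigma)}\,\|x-y\|$, and $F$ is $\sqrt{\rho(\Sigma)/n}$-Lipschitz.

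To match the normalization of Theorem~\ref{Ineg:Ibragimov}, which is stated for $1$-Lipschitz functions, I rescale: the map $G = (n/\rho(\Sigma))^{1/2}\,F$ is $1$-Lipschitz, so applying the theorem to $\eta \sim \mathcal{N}_n(0,\Id)$ (that is, $\sigma=1$) produces an exponential variable $\xi$ with $G(\eta) \leq \E[G(\eta)] + \sqrt{2\xi}$. Multiplying through by $\sqrt{\rho(\Sigma)/n}$ and noting that $F(\eta) = \|\Proj_S(\Sigma^{1/2}\eta)\|_n = \|\Proj_S(\varepsilon)\|_n$ gives exactly the stated bound. There is no genuine obstacle in this argument; the only points demanding care are the two operator-norm estimates (the projection being a contraction and $\|\Sigma^{1/2}\|$ equaling $\sqrt{\rho(\Sigma)}$) together with correctly carrying the normalization factor $n^{-1/2}$ when passing between the Lipschitz constant for $\|\cdot\|$ and the final inequality expressed in $\|\cdot\|_n$.
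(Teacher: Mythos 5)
Your proof is correct and follows essentially the same route as the paper: writing $\varepsilon = \Sigma^{1/2}\eta$ with $\eta \sim \mathcal{N}_n(0,\Id)$, bounding the Lipschitz constant of $x \mapsto \|\Proj_S(\Sigma^{1/2}x)\|_n$ by $\sqrt{\rho(\Sigma)/n}$ using the contraction property of the projection and the operator norm of $\Sigma^{1/2}$, and applying the Cirel'son--Ibragimov--Sudakov inequality. The only cosmetic difference is that you rescale to a $1$-Lipschitz function before invoking Theorem~\ref{Ineg:Ibragimov}, whereas the paper absorbs the constant into the $\sigma$ parameter of that theorem; both are equivalent.
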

\begin{proof}
 Let $\varepsilon \sim \mathcal{N}_n(0, \Sigma)$, then $\varepsilon$ satisfies $\varepsilon = \sqrt{\Sigma} \eta$ with $\eta \sim \mathcal{N}_n(0, Id)$.
Let  $S$ be a linear subspace of $\R^n$.  We then check that the function $\eta \rightarrow \left \| \Proj_{S} (\sqrt{\Sigma} \eta) \right \|_n$ is a Lipschitz function
\begin{eqnarray*}
\left \| \Proj_{S} (\sqrt{\Sigma} x) - \Proj_{S} (\sqrt{\Sigma} y) \right \|_n   &\leq & \left \| \sqrt{\Sigma} (x-y) \right \|_n \\
&\leq &   \rho ( \sqrt \Sigma )  \left \| x - y \right \|_n \\
&\leq &   \sqrt{\rho ( \Sigma )} \left \| x - y \right \|_n =  \sqrt{\frac{\rho ( \Sigma )}{n}} \left \| x - y \right \| .
\end{eqnarray*}
By applying Theorem~\ref{Ineg:Ibragimov} to the function $\eta \rightarrow \left \| \Proj_{S} (\sqrt{\Sigma} \eta) \right \|_n$, we find that
\begin{equation*}
\left \| \Proj_{S} (\sqrt{\Sigma} \eta) \right \|_n \leq \mathbb{E} \left \| \Proj_{S} (\sqrt{\Sigma} \eta) \right \|_n + \sqrt{\frac{\rho ( \Sigma )}{n}} \sqrt{2 \xi}.
\end{equation*}
\end{proof}

We  are now in position to prove Proposition~\ref{prop:pen_Z}. 
Let $\bar{S}_{m}$  be the linear space spanned by  $S_{m}$ and $t^{\ast}$.   
By applying the inequality  $2 \langle x, y \rangle_n \leq a \| x \|_n^{2} + \| y \|_n^{2}/a$ for $a > 1$, we find that
\begin{eqnarray*}
2 \langle \varepsilon, \hat{t}_{\hat m} - t^{\ast} \rangle_n - \pen(\hat{m}) &= &2 \langle \Proj_{\bar{S}_{\hat{m}}} (\varepsilon), \hat{t}_{\hat m} - t^{\ast} \rangle_n - \pen(\hat{m}) \\
&\leq& a \left \| \Proj_{\bar{S}_{\hat{m}}} (\varepsilon) \right \|_n^{2} + a^{-1} \left \| \hat{t}_{\hat m} - t^{\ast} \right \|_n^{2} - \pen(\hat{m}) \\
&\leq& Z + a^{-1} \left \| \hat{t}_{\hat m} - t^{\ast} \right \|_n^{2},
\label{chap4:serie_ineg_Z}
\end{eqnarray*}
where 
\[\ Z =  a \left \| \Proj_{\bar{S}_{\hat{m}}} (\varepsilon) \right \|_n^{2} - \pen(\hat{m}). \]
Now,  we can write that
\begin{eqnarray*}
 \mathbb{E} (Z)=
 \mathbb{E} \left[ a \left \| \Proj_{\bar{S}_{\hat{m}}} (\varepsilon) \right \|_n^{2} - \pen(\hat{m}) \right] &\leq& a \mathbb{E} \left[ \max_{m \in \mathcal{M}} \left( \left \| \Proj_{\bar{S}_{m}} (\varepsilon) \right \|_n^{2} - \frac{1}{a} \pen(m) \right) \right] \\
&\leq&  a \sum_{m \in \mathcal{M}} \mathbb{E} \left[ \left( \left \| \Proj_{\bar{S}_{m}} (\varepsilon) \right \|_n^{2} - \frac{1}{a} \pen(m) \right)_{+} \right]. 
 \end{eqnarray*}

Let $m\in \mathcal M$. We start from the elementary inequality
\begin{eqnarray} \label{eq:CS}
\mathbb{E}  \left  \| \Proj_{\bar{S}_{m}} (\varepsilon) \right \|_n &\leq &  \left (\mathbb{E} \left \| \Proj_{\bar{S}_{m}} (\varepsilon) \right \|_n^{2} \right)^{1/2}  .
 \end{eqnarray}
By permuting the matrices  inside the trace operator, we can show that the quantity on the right side in \eqref{eq:CS} is exactly  equal to  $\sqrt{\frac{1}{n} \tr \left( \Proj_{\bar{S}_{m}} \Sigma   \right)}$. However $\bar{S}_m$ is unknown because it depends on $ t^{\ast}$ and thus we can not directly define the penalty in function of  $\tr \left( \Proj_{\bar{S}_{m}} \Sigma   \right)  $. 
We then use the decomposition 
\[ \Proj_{ \bar S_{m}}   =\Proj_{ S_{m}}    \oplus^{\perp}  \Proj_{ V_{m}}, \]
where  $V_m$ is the orthogonal to $S_m$ in $ \bar S_{m}$. Note that the dimension of $V_m$  is (at most) one.  By Pythagoras theorem
$
    \left \|  \Proj_{\bar S_m} (\varepsilon)  \right \|_n ^2  =  \left \|  \Proj_{ S_m}  (\varepsilon) \right \|_n ^2   +    \left \|  \Proj_{V_m} (\varepsilon)  \right \|_n ^2  
$. Now
$$
\E  \left \|  \Proj_{S_m} (\varepsilon)  \right \|_n^2=  \frac 1 n \tr \E \left ( \varepsilon^{t} \Proj_{S_m} \varepsilon \right )=  \frac 1 n \tr \E \left ( \varepsilon \varepsilon^{t} \Proj_{S_m}  \right )=
\frac 1 n \tr \left ( \Sigma \Proj_{S_m}  \right )= \frac 1 n \tr \left (  \Proj_{S_m}  \Sigma \right ) \, ,
$$
and 
$$
\E  \left \|  \Proj_{V_m} (\varepsilon)  \right \|_n^2= \frac 1 n \tr \left (  \Proj_{V_m}  \Sigma \right ) \leq \frac{\rho ( \Sigma)}{n} \, .
$$
 Finally
\begin{equation} \label{eq:StildeS}
  \E  \left \|  \Proj_{\bar S_m} (\varepsilon)  \right \|_n^2   \leq   \frac1n \tr \left( \Proj_{S_{m}}  \Sigma   \right)+ \frac {\rho(\Sigma)}{n}.
 \end{equation}

 According to Lemma~\ref{chap4:concentration} and using the inequalities  \eqref{eq:CS} and  \eqref{eq:StildeS}, there exists a random variable $\xi_m$ following an exponential distribution of parameter $1$ such that
\begin{equation*}
\left \| \Proj_{\bar{S}_{m}} (\varepsilon) \right \|_n \leq  \sqrt{  \frac 1 n  \tr \left( \Proj_{ S_{m}} \Sigma   \right) +  \frac {\rho(\Sigma)}{n} }+ \sqrt{\frac {\rho(\Sigma)}{n}} \sqrt{2 \xi_{m}}.
\end{equation*}
Thus, the random variable $Z$ satisfies
\begin{eqnarray*} 
\mathbb{E}(Z)  &\leq&  a \sum_{m \in \mathcal{M}} \mathbb{E} \left[ \left( \left \| \Proj_{\bar{S}_{m}} (\varepsilon) \right \|_n^{2} - \frac{1}{a} \pen(m) \right)_{+} \right] \\
&\leq &  a \sum_{m \in \mathcal{M}} \mathbb{E} \left[ \left( \left(  \sqrt{ \frac1n \tr \left( \Proj_{ S_{m}} \Sigma   \right) +  \frac {\rho(\Sigma)}{n} }+ \sqrt{\frac {\rho(\Sigma)}{n}} \sqrt{2 \xi_{m}}  \right)^{2} - \frac{1}{a} \pen(m) \right)_{+} \right].
 \end{eqnarray*}
We assume as in \eqref{eq:pen0} that 
\[\pen(m) \geq  \frac Kn  \left(  \sqrt{ \tr \left( \Proj_{ S_{m}} \Sigma   \right) +  \rho(\Sigma) }+ \sqrt{\rho(\Sigma)} \sqrt{2 \log \left( \frac{1}{\pi_{m}} \right) }  \right)^{2} . \]
Then,
\begin{multline*}  
\mathbb{E}(Z) \leq \frac an \sum_{m \in \mathcal{M}} \mathbb{E} \Bigg[ \Bigg( \left( \sqrt{    \tr \left( \Proj_{ S_{m}} \Sigma   \right) +  \rho(\Sigma) } + \sqrt{\rho(\Sigma)} \sqrt{2 \xi_{m}} \right)^{2} \\
- \frac{K}{a} \left( \sqrt{  \tr \left( \Proj_{ S_{m}} \Sigma   \right) +  \rho(\Sigma)  } + \sqrt{ \rho(\Sigma) } \sqrt{2 \log \left( \frac{1}{\pi_{m}} \right)} \right)^{2} \Bigg)_{+} \Bigg].
\end{multline*}
Using the inequality $(x + y)^{2} \leq (1 + \alpha) x^{2} + (1 + \alpha^{-1}) y^{2}$, and taking  $\alpha = \frac{K-a}{a}$ for $K >a  >1$, we find that 
\begin{multline*} 
\left( \sqrt{  \tr \left( \Proj_{ S_{m}} \Sigma   \right) +  \rho(\Sigma)} + \sqrt{\rho(\Sigma)} \sqrt{2 \xi_{m}} \right)^{2} \\
 \leq  \left( \sqrt{   \tr \left( \Proj_{ S_{m}} \Sigma   \right) +  \rho(\Sigma)  } + \sqrt{\rho(\Sigma)} \sqrt{2 \log \left( \frac{1}{\pi_{m}} \right)} + \sqrt{\rho(\Sigma)} \sqrt{2 \left( \xi_{m} - \log \left( \frac{1}{\pi_{m}} \right) \right)_{+}} \right)^{2} \\
 \leq \frac{K}{a} \left( \sqrt{   \tr \left( \Proj_{ S_{m}} \Sigma   \right) +  \rho(\Sigma)} + \sqrt{\rho(\Sigma)} \sqrt{2 \log \left( \frac{1}{\pi_{m}} \right)} \right)^{2} + \frac{2K \rho(\Sigma)}{K-a} \left( \xi_{m} - \log \left( \frac{1}{\pi_{m}} \right) \right)_{+}.
\end{multline*}
Next,
\begin{multline*}
\mathbb{E} \Bigg[ \Bigg( \left( \sqrt{ \tr \left( \Proj_{ S_{m}} \Sigma   \right) +  \rho(\Sigma)} + \sqrt{\rho(\Sigma)} \sqrt{2 \xi_{m}} \right)^{2} \\
 - \frac{K}{a} \left( \sqrt{ \tr \left( \Proj_{ S_{m}} \Sigma   \right) +  \rho(\Sigma)} + \sqrt{\rho(\Sigma)} \sqrt{2 \log \left( \frac{1}{\pi_{m}} \right)} \right)^{2} \Bigg)_{+} \Bigg] \\
\leq \mathbb{E} \left[ \frac{2K \rho(\Sigma)}{K-a} \left( \xi_{m} - \log \left( \frac{1}{\pi_{m}} \right) \right)_{+} \right] 
\leq \frac{2K \rho(\Sigma)}{K-a} \pi_{m},
\end{multline*}
because $\mathbb{E} \left[ \left( \xi_{m} - \log \left( \frac{1}{\pi_{m}} \right) \right)_{+} \right] = \exp(- \log(\frac{1}{\pi_{m}})) = \pi_{m}$.
Since $\sum_{m \in \mathcal{M}} \pi_{m} = 1$, we finally obtain that
\begin{equation*} 
\mathbb{E}(Z) \leq   a \sum_{m \in \mathcal{M}} \frac{2K}{K-a} \pi_{m}  \frac {\rho(\Sigma)}{n}
 = \frac{2aK  }{K-a} \frac {\rho(\Sigma)}{n} \, .
 \end{equation*}
For any $K >1$, take $a = \frac{K+1}{2}$. Then $K >a  >1$ is satisfied and the proof of Proposition~\ref{prop:pen_Z} is complete with $L_{K} =  \frac{2K^2+2K}{K-1}$.
\subsection{Proof of Lemma~\ref{mainLem}}

\setcounter{MaxMatrixCols}{20}

For any $m \in \{1, \ldots, n\}$ and any $j \in  \{1, \ldots, m\}$, we define the discrete interval
$$
I_j= \left \{ i \in \{1, \ldots, n\} : \frac i n \in \left [ \frac{(j-1)}{m}, \frac j m \right [ \right \} \, ,
$$
and we denote by $\ell(j)$ the length of $I_j$: $\ell(j)= \text{Card}(I_j)$. Note that, for all $j$, $[n/m]\leq \ell_j \leq [n/m]+1$. The linear space
$S_m$   induced by the family of piecewise polynomials of degree at most $r$  on the regular partition of size $m$ of the interval $[0,1]$ is the space generated by the $(r+1)m$ columns of the design
\[  X=
\begin{pmatrix} 
1 & 1  & \dots & 1         & 0 & 0 &  \dots & 0 & 0 & \dots  & 0  & 0 &  \dots & 0\\
 1 & 2 &  \dots & 2^r     & 0 & 0 &  \dots &  0 & 0 & \dots  & 0 & 0 & \dots & 0 \\
\vdots & \vdots & \vdots & \vdots &  \vdots & \vdots & \vdots  & \vdots & \vdots & \vdots  & \vdots  & \vdots & \vdots & \vdots \\
 1 & \ell_1 &  \dots & \ell_1^r  & 0 & 0 &  \dots & 0 & 0 & \dots & 0 & 0 & \dots & 0 \\
 0 & 0 &  \dots & 0 & 1 & 1  & \dots & 1 & 0 & \dots   & 0 & 0 & \dots & 0\\
 0 & 0 & \dots  & 0 & 1 & 2 &  \dots & 2^r & 0 & \dots  & 0  & 0  & \dots & 0 \\
 \vdots & \vdots & \vdots & \vdots & \vdots & \vdots & \vdots  & \vdots & \vdots & \vdots  & \vdots  & \vdots & \vdots  & \vdots \\
 0 & 0 &  \dots & 0 & 1 & \ell_2 &  \dots & \ell_2^r & 0 & \dots & 0  & 0 & \dots & 0 \\
 \vdots & \vdots & \vdots & \vdots & \vdots & \vdots & \vdots  & \vdots & \vdots & \vdots  & \vdots  & \vdots & \vdots & \vdots \\
  \vdots & \vdots & \vdots & \vdots & \vdots & \vdots & \vdots  & \vdots & \vdots & \vdots  & \vdots  & \vdots  & \vdots & \vdots \\
 0 & 0 &  \dots & 0 & 0 & 0 &  \dots & 0 & 0 & \dots & 1 & 1 & \dots  & 1 \\
 0 & 0 &  \dots & 0 & 0 & 0 &  \dots & 0 & 0 & \dots & 1  & 2 & \dots  & 2^r\\
 \vdots & \vdots & \vdots & \vdots &  \vdots & \vdots & \vdots  & \vdots & \vdots & \vdots  & \vdots  & \vdots & \vdots  & \vdots \\
 0 & 0 &  \dots & 0 & 0 & 0 &  \dots & 0 & 0 & \dots & 1 & \ell_m & \dots & \ell_m^r
\end{pmatrix}. 
\]

Let $c_k$ be the $k$-th column of the matrix $X$. Note that these columns are not all orthogonal, but they are linearly independent. 

For $k \in \{1, \ldots, m\}$, let $V_k$ be the linear subspace of ${\mathbb R}^n$  generated by 
the $c_j$'s for $j \in \{(k-1)(r+1) +1, \ldots, k(r+1) \}$.  Note that the subspaces $V_k$ are orthogonal subspaces, so that 
$$
\left \|\Proj_{S_m}(\varepsilon) \right \|_n^2= \sum_{k=1}^m \left \|\Proj_{V_k}(\varepsilon) \right \|_n^2.
$$
We shall  prove that there exists a constant $C>0$ such that, for any $k \in \{1, \ldots, m\}$, 
\begin{equation} \label{projV}
n  {\mathbb E} \left (\left \|\Proj_{V_k}(\varepsilon) \right \|_n^2  \right )  \leq C  \frac{n^{1-\gamma}}{m^{1-\gamma}} \, . 
\end{equation}
If \eqref{projV} is true then the proof of Lemma \ref{mainLem} is easy to complete. Indeed 
$$
 \tr \left( \Proj_{ S_{m}} \Sigma   \right) =  n {\mathbb E} \left (  \left \|\Proj_{S_m}(\varepsilon) \right \|_n^2 \right ) 
= \sum_{k=1}^m n  {\mathbb E} \left (\left \|\Proj_{V_k}(\varepsilon) \right \|_n^2 \right ) \leq Cm^\gamma n^{1- \gamma} \, .
$$

It remains to prove \eqref{projV}. In fact, it suffices to prove \eqref{projV} for $V_1$, the argument being unchanged for the other 
$V_k$'s. Let $e_k=  c_k/ \sqrt{c_k^t c_k}$, so that $n \|e_k\|_2^2=1$, and let $X_1$ the 
$n \times (r+1)$ matrix composed of the $(r+1)$ columns $e_1, \ldots e_{r+1}$. We can write 
$$
  \Proj_{V_1}(\varepsilon)= \alpha_1 e_1 + \cdots + \alpha_{r+1} e_{r+1},
$$
where
$$
   (\alpha_1, \ldots, \alpha_{r+1})^t = (X_1^t X_1)^{-1} X_1^t \varepsilon \, .
$$
Clearly
\begin{equation} \label{rough}
   \sqrt{\sum_{k=1}^{r+1} \alpha_k^2} \leq \rho \left ( (X_1^t X_1)^{-1}  \right )  \sqrt{\sum_{k=1}^{r+1} (e_k^t \varepsilon)^2} \, , 
\end{equation}
where $ \rho \left ( (X_1^t X_1)^{-1}  \right )$ is the spectral radius of $(X_1^t X_1)^{-1} $. Since 
$$
 n  \left \|\Proj_{V_1}(\varepsilon) \right \|_n^2  \leq (r+1)^2 \sum_{k=1}^{r+1} \alpha_k^2 \, ,
$$
we infer from \eqref{rough} that 
\begin{equation}\label{close}
n  {\mathbb E} \left (\left \|\Proj_{V_1}(\varepsilon) \right \|_n^2 \right ) \leq \left ( (r+1) \rho \left ( (X_1^t X_1)^{-1}  \right )\right )^2  \, \sum_{k=1}^{r+1} {\mathbb E}\left ( (e_k^t \varepsilon)^2 \right ) \, .
\end{equation}
Before going further, we need to check that $ \rho \left ( (X_1^t X_1)^{-1}  \right )$ is uniformly bounded: indeed this quantity depends on the length $\ell_1$, which can be as large as $n$. This is true, because $ X_1^t X_1 $ tends to $A$ as $\ell_1 \rightarrow \infty$, where $A$ is an invertible $(r+1)  \times (r+1)$ matrix  (in fact one can  check that $A_{i,j}= \sqrt{(2j+1)(2i+1)}/(j+i+1)$).  It follows that, as $\ell_1$ varies,  $ \rho \left ( (X_1^t X_1)^{-1}  \right )$ is a sequence of positive numbers converging to 
$\rho(A^{-1})$: it is therefore uniformly bounded. It follows from \eqref{close}  that there exists $K>0$ such that 
$$
n  {\mathbb E} \left (\left \|\Proj_{V_1}(\varepsilon) \right \|_n^2 \right ) \leq 
K \sum_{k=1}^{r+1} {\mathbb E}\left ( (e_k^t \varepsilon)^2 \right ) \, .
$$
Hence \eqref{projV} will be proved  for $V_1$ if there exists $C_1>0$ such that, for any $k \in \{1, \ldots, r+1\}$, 
\begin{equation}\label{coeffproj}
 {\mathbb E}\left ( (e_k^t \varepsilon)^2 \right )= {\mathbb E}\left ( \left (\frac{c_k^t \varepsilon}{\sqrt {c_k^t c_k}} \right )^2 \right )  \leq C_1  \frac{n^{1-\gamma}}{m^{1-\gamma}} \, . 
\end{equation}

It remains to prove \eqref{coeffproj}.
Let then $k \in \{1, \ldots, r+1\}$. By stationarity, 
$$
{\mathbb E}\left ( \left (c_k^t \varepsilon \right )^2 \right ) = \sum_{i=1}^{\ell_1}
\sum_{j=1}^{\ell_1} i^k j^k \gamma_\varepsilon (j-i)  \leq \gamma_\varepsilon(0) \sum_{i=1}^{\ell_1} i^{2k} 
+2 \sum_{j=1}^{\ell_1} |\gamma_\varepsilon(j)| \sum_{i=1}^{\ell_1-j} i^k (i+j)^k \, .
$$
Now, by Cauchy-Schwarz, 
$$
\sum_{i=1}^{\ell_1-j} i^k (i+j)^k \leq \sum_{i=1}^{\ell_1} i^{2k}  = c_k^t c_k \, .
$$
Combining the two last inequalities, we get 
\begin{equation}\label{finalLem}
 {\mathbb E}\left ( \left (\frac{c_k^t \varepsilon}{\sqrt {c_k^t c_k}} \right )^2 \right ) \leq \gamma_{\varepsilon}(0) + 2\sum_{j=1}^{\ell_1}  |\gamma_\varepsilon(j)| \, .
\end{equation}
Now, recall that \eqref{lrd} holds, that is 
$
     |\gamma_\varepsilon (k)| \leq \kappa (k+1)^{-\gamma} 
$ for some $\kappa >0$ and $\gamma \in (0,1)$. From \eqref{finalLem}, we easily infer that there exists $C_2>0$ such that 
$$
 {\mathbb E}\left ( \left (\frac{c_k^t \varepsilon}{\sqrt {c_k^t c_k}} \right )^2 \right ) \leq C_2 \ell_1^{1-\gamma} \, .
$$
Since $[n/m]\leq \ell_1 \leq [n/m]+1$, \eqref{coeffproj} easily follows. This completes the proof of Lemma \ref{mainLem}.


\subsection{Proof of Lemma~\ref{newLem}}

We keep the notations of the proof of Lemma \ref{mainLem}. Recall that the case of regular regressograms corresponds to the degree $r=0$. In that case, the design matrix $X$ of the proof of Lemma \ref{mainLem} contains only the $m$ orthogonal columns filled with 0 and 1, and the linear space $S_m$ has dimension $m$. Denote by $c_1, \ldots, c_m$ the $m$ columns of the design $X$.

We can write the exact expression of $ \Proj_{S_m}(\varepsilon)$
$$
  \Proj_{S_m}(\varepsilon)= \bar \varepsilon_1 c_1 + \bar \varepsilon_2 c_2 + \cdots + \bar \varepsilon_m c_m \,, \quad \text{with}  \quad \bar \varepsilon_k= \frac{1}{\ell_k} \sum_{i \in I_k} \varepsilon_i \, .
$$
Consequenly
$$
 n \left \|\Proj_{S_m}(\varepsilon) \right \|_n^2 = \ell_1  \bar \varepsilon_1^2 + \ell_2  \bar \varepsilon_2^2 + \cdots + \ell_m  \bar \varepsilon_m^2 \, .
$$
Now, it follows from \eqref{vsd} that ${\mathbb E} (\bar \varepsilon_i^2) \leq \kappa \ell_i^{-\gamma}$. Hence 
$$
 \tr \left( \Proj_{ S_{m}} \Sigma   \right) =  n {\mathbb E} \left (  \left \|\Proj_{S_m}(\varepsilon) \right \|_n^2 \right ) \leq \kappa \sum_{k=1}^m \ell_k^{1-\gamma} \, .
$$
Since, for all $j$, $[n/m]\leq \ell_j \leq [n/m]+1$, we infer that there exists a positive constant $C$ depending only on $\kappa$ and $\gamma$ such that 
$$
 \tr \left( \Proj_{ S_{m}} \Sigma   \right)  \leq C m^\gamma n^{1-\gamma} \, .
$$
This concludes the proof of Lemma \ref{newLem}.

\bibliography{select_gauss_arxiv_bib}
\bibliographystyle{amsalpha}

\end{document}